\newtheorem{theorem}{Theorem}[section]
\newtheorem{lemma}[theorem]{Lemma}
\newtheorem{proposition}[theorem]{Proposition}
\newtheorem{definition}[theorem]{Definition}
\newtheorem{conjecture}[theorem]{Conjecture}
\def\ord{\textup{\textrm{ord}}}
\def\N{\textup{\textrm{N}}}
\def\rec{\textup{\textrm{rec}}}
\def\Gal{\textup{\textrm{Gal}}}
\def\Xint#1{\mathchoice
{\XXint\displaystyle\textstyle{#1}}{\XXint\textstyle\scriptstyle{#1}}%
{\XXint\scriptstyle\scriptscriptstyle{#1}}{\XXint\scriptscriptstyle\scriptscriptstyle{#1}}\!\int}
\def\XXint#1#2#3{{\setbox0=\hbox{$#1{#2#3}{\int}$}\vcenter{\hbox{$#2#3$}}\kern-.5\wd0}}
\def\multint{\Xint\times}
\tikzset{mynode/.style={font=\footnotesize,inner sep=0pt,text=black}
}
\title{On the Root of Unity Ambiguity in a Formula for the Brumer--Stark Units}
\author{Matthew H. Honnor}
\date{\today}
\begin{document}

\maketitle

\begin{abstract}

We prove a conjectural formula for the Brumer--Stark units. Dasgupta--Kakde have shown the formula is correct up to a bounded root of unity. In this paper we resolve the ambiguity in their result. We also remove an assumption from Dasgupta--Kakde's result on the formula.

\end{abstract}

\tableofcontents

\section{Introduction}

Let $F$ be a number field with ring of integers $\mathcal{O}=\mathcal{O}_F$. Let $\mathfrak{p}$ be a prime ideal of $F$, lying above a rational prime $p$, and let $H$ be a finite abelian extension of $F$ such that $\mathfrak{p}$ splits completely in $H$. Write $G=\Gal(H/F)$ and let $\sigma \in G$. In 1981, Tate proposed the Brumer--Stark conjecture, \cite[Conjecture $5.4$]{MR656067}, stating, for each $\sigma \in G$, the existence of a $\mathfrak{p}$-unit $u_\mathfrak{p}(\sigma)$ in $H$, the Brumer--Stark unit. This unit has $\mathfrak{P}$-order equal to the value of a partial zeta function at 0, for a prime ideal $\mathfrak{P}$ of $H$, lying above $\mathfrak{p}$. This partial zeta function depends on $\sigma$. The unit $u_\mathfrak{p}(\sigma)$ is only non-trivial when $F$ is totally real and $H$ is totally complex containing a complex multiplication (CM) subfield, we assume this for the remainder of the paper. The ground-breaking work of Dasgupta--Kakde and Dasgupta--Kakde--Silliman--Wang in \cite{Brumerstark} and \cite{BrumerstarkoverZ} has shown that the Brumer--Stark conjecture holds.

In \cite[Definition 3.18]{MR2420508}, Dasgupta constructed explicitly, in terms of the values of Shintani zeta functions at $s=0$, an element $u_1(\sigma) \in F_\mathfrak{p}^\ast$. In \cite[Conjecture 3.21]{MR2420508}, Dasgupta conjectured that this unit is equal to the Brumer--Stark unit. This formula has recently been shown to be correct up to a bounded root of unity by Dasgupta--Kakde in \cite[Theorem 1.6]{intgrossstark}. The key ingredient in the proof of the above theorem is Dasgupta--Kakde's proof of the $p$-part of the integral Gross--Stark conjecture, which Dasgupta proved in \cite[Theorem 5.18]{MR2420508} implies the result. The work in \cite[Theorem 5.18]{MR2420508} requires the assumption that $S$, a finite set of places of $F$ required for the Brumer--Stark conjecture, contains a prime with associated Frobenius element equal to complex conjugation in $H$. The first result of this paper is the removal of this assumption from \cite[Theorem 1.6]{intgrossstark}. For this we need to work with a different formula for the Brumer--Stark units.

There have been two other formulas conjectured for the Brumer--Stark units. These two formulas are cohomological in nature and were conjectured by Dasgupta--Spie\ss \ in \cite{MR3861805} and \cite{MR3968788}. We denote these formulas by $u_2(\sigma)$ and $u_3(\sigma)$, respectively. In \cite[Conjecture 6.1]{MR3861805} and \cite[Remark 4.5]{MR3968788}, respectively, $u_2(\sigma)$ and $u_3(\sigma)$ are conjectured to be equal to the Brumer--Stark unit.  Recent joint work of the author with Dasgupta in \cite{equivformulas} has proved that these three conjectural formulas for the Brumer--Stark units are equal. Following from the equality of the formulas, shown in \cite{equivformulas}, Dasgupta--Kakde's work in \cite{intgrossstark} implies that $u_2(\sigma)$ and $u_3(\sigma)$ are also equal to $u_\mathfrak{p}(\sigma)$ up to bounded a root of unity. 

To remove the assumption on $S$ from \cite[Theorem 1.6]{intgrossstark} we work with $u_2(\sigma)$. The advantage of working with this formula is that it is defined for any finite abelian extension $H/F$ in which $\mathfrak{p}$ splits completely. In particular, it does not require that $H$ is totally complex. Furthermore, \cite[Proposition 6.3]{MR3861805} shows that the formula is trivial if $H$ is not totally complex. This along with the other functorial properties of the formula allows us, in \S3, to remove this assumption. 

The main work of this paper is considering the root of unity ambiguity in \cite[Theorem 1.6]{intgrossstark}. We show that the formula is in fact correct without any ambiguity. We work with $u_2(\sigma)$ and use the equality of the formulas to obtain these results for $u_1(\sigma)$ and $u_3(\sigma)$ as well.

The strategy for showing our main result takes inspiration from Dasgupta's proof of \cite[Theorem 5.18]{MR2420508}. In this theorem, Dasgupta proves that the integral Gross--Stark conjecture implies that $u_1(\sigma)$ is equal to $u_\mathfrak{p}(\sigma)$ up to a root of unity. We are able to modify the proof of this theorem to show that, for each odd rational prime $l$, the integral Gross--Stark conjecture implies that $u_2(\sigma)$ is equal to $u_\mathfrak{p}(\sigma)$ in $F_\mathfrak{p}/D_l $. Where $D_l \subset F_\mathfrak{p}$ is such that $D_l$ does not contain any roots of unity with order divisible by $l$. We then show, using the fact that $u_2(\sigma)$ equals $u_\mathfrak{p}(\sigma)$ up to a root of unity, that $u_2(\sigma)$ equals $u_\mathfrak{p}(\sigma)$ up to a $2$-power root of unity. 

Removing the $2$-power root of unity ambiguity requires further work since the strategy used for the odd primes adds in an additional $2$-power root of unity in the process of controlling the other roots of unity. Instead we show that, the $2$-part of the integral Gross--Stark conjecture implies that, for any $\sigma, \tau \in G$, we have $u_2(\sigma)u_2(\tau)^{-1}=u_\mathfrak{p}(\sigma)u_\mathfrak{p}(\tau)^{-1}$ in $F_\mathfrak{p}/D_2 $. Where $D_2 \subset F_\mathfrak{p}$ is such that $D_2$ does not contain any roots of unity with order divisible by $2$.  This, combined with the result away from $2$, shows that the ambiguity between $u_2(\sigma)$ and $u_\mathfrak{p}(\sigma)$ is independent of $\sigma \in G$. We then utilise the norm compatibility property, for the Brumer--Stark units and the formulas, to allow us to remove this ambiguity.

We remark here that the our proofs are only able to work with $u_2(\sigma)$ due to the nature of the cohomological formula and we are currently unable to obtain the result for $u_1(\sigma)$ and $u_3(\sigma)$ without first knowing $u_1(\sigma)=u_2(\sigma)=u_3(\sigma)$. The reasons for this are made more precise in \S 2. 

In \cite[Corollary 4.3]{MR2336038}, Burns shows that the integral Gross--Stark conjecture follows from the minus part of the eTNC for finite CM extensions of totally real fields. From now on, we refer to this conjecture as $\text{eTNC}^-$. 

The recent work of Bullach--Burns--Daoud--Seo in \cite[Theorem B]{etnc} proves $\text{eTNC}^-$ away from $2$ for finite CM extensions of totally real fields, a key imput for their work is the proof of the Brumer--Stark conjecture away from $2$ in \cite{Brumerstark}. Furthermore, in \cite{etnconZ}, Dasgupta--Kakde--Silliman prove $\text{eTNC}^-$ over $\mathbb{Z}$, this result uses the proof of the Brumer--Stark conjecture over $\mathbb{Z}$ in \cite{BrumerstarkoverZ}. We remark that the argument given in \cite{etnconZ} adopts the central strategy developed in \cite{etnc}. As noted above, this implies that the integral Gross--Stark conjecture holds. Thus we can show that the formulas are equal to the Brumer--Stark unit.

\subsection{Acknowledgements}

The author would like to thank Samit Dasgupta and Michael Spie\ss \ for generously giving their time and for many stimulating discussions during the course of this research. In particular, he would like to thank Michael Spie\ss \ for his help with the argument given in \S3. He would also like to thank Mahesh Kakde for suggesting this problem and, along with Dominik Bullach, for many useful conversations. The author wishes to acknowledge the financial support of the Heilbronn Institute for Mathematical Research and Imperial College London.

\section{Brumer--Stark units}

Let $R$ denote a finite set of places of $F$ such that $R$ contains the archimedean places, $\mathfrak{p} \nin R$, and $R$ contains the places that are ramified in $H$. We let $S = R \cup \{ \mathfrak{p} \}$. We also denote $G=\Gal(H/F)$. We fix this notation throughout the paper.

\begin{definition}
For $\sigma \in G$, we define the \textbf{partial zeta function}
\begin{equation}\label{pzf}
    \zeta_{R}(\sigma, s)= \sum_{\substack{(\mathfrak{a},R)=1 \\ \sigma_\mathfrak{a}=\sigma}} \N\mathfrak{a}^{-s}.
\end{equation}
Here, the sum is over all non-zero integral ideals, $\mathfrak{a}\subset \mathcal{O}_F$, that are relatively prime to the elements of $R$ and with associated Frobenius element, $\sigma_\mathfrak{a} \in G$, equal to $\sigma$.
\end{definition}

Note that the series (\ref{pzf}) converges for $\text{Re}(s)>1$ and has meromorphic continuation to $\mathbb{C}$, regular outside $s=1$. The partial zeta functions associated to the sets of primes $R$ and $S$
are related by the formula
\[ \zeta_{S}(\sigma, s)= (1-\N\mathfrak{p}^{-s}) \zeta_{R}(\sigma,s). \]
If $L$ is a finite abelian extension of $F$ and $\sigma \in \Gal(L/F)$, we use the notation $\zeta_{R}(L/F, \sigma, s)$ for the partial zeta function defined as above but with the equality $\sigma_\mathfrak{a}=\sigma$ being viewed in $\Gal(L/F)$.

\begin{definition}\label{updefn}
Define the group
\begin{equation*}
U_{\mathfrak{p}}=\{ u \in H^\ast : \ \mid u \mid_\mathfrak{P}=1 \ \text{if} \ \mathfrak{P} \ \text{does not divide} \ \mathfrak{p} \}. 
\end{equation*}
\end{definition}

Here, $\mathfrak{P}$ ranges over all finite and archimedean places of $H$; in particular, complex conjugation in $H$ acts as an inversion on $U_{\mathfrak{p}}$. We introduce an auxiliary finite set $T$ of primes of $F$, disjoint from $S$. The partial zeta function associated to the sets $R$ and $T$ is defined by the group ring equation
\begin{equation}\label{pzfT}
    \sum_{\sigma \in G} \zeta_{R,T}(\sigma, s)[\sigma]= \prod_{\eta \in T}(1-[\sigma_\eta]\N\eta^{1-s}) \sum_{\sigma \in G} \zeta_{R}(\sigma, s)[\sigma].
\end{equation}
We also assume that the set $T$ contains at least two primes of different residue characteristic or at least one prime $\eta$ with absolute ramification degree at most $l-2$ where $\eta$ lies above $l$. We have this assumption on $T$ from now on. It follows from work of Deligne--Ribet \cite{MR579702} and Cassou-Nogu\'es \cite{MR524276} that, $\zeta_{R,T}(L/F, \sigma, 0) \in \mathbb{Z}$, for any finite abelian extension $L/F$ unramified outside $R$ and any $\sigma \in \Gal(L/F)$. The following conjecture was first stated by Tate and called the Brumer--Stark conjecture, \cite[Conjecture $5.4$]{MR656067}. We present the formulation given by Gross.
 
\begin{conjecture}[Conjecture $7.4$, \cite{MR931448}]\label{Conj2.5}
Let $\mathfrak{P}$ be a prime in $H$ above $\mathfrak{p}$. There exists an element $u_T \in U_{\mathfrak{p}}$ such that $u_T \equiv 1 \pmod{T}$, and for all $\sigma \in G$, we have 
\begin{equation}\label{uteqn}
    \ord_\mathfrak{P}(\sigma( u_T))= \zeta_{R,T}(H/F, \sigma,0).
\end{equation}
\end{conjecture}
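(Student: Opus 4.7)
The plan is to prove Conjecture 2.5 following the Ribet-style strategy pioneered by Dasgupta--Kakde in \cite{Brumerstark} and extended in \cite{BrumerstarkoverZ}. First, I would pass to an Iwasawa-theoretic setting, working over the cyclotomic $\mathbb{Z}_p$-tower of $H$, and encode the partial zeta values $\zeta_{R,T}(\sigma,0)$ as the constant terms of a group-ring-valued Hilbert-modular Eisenstein series on $\mathrm{GL}_2$ over $F$, using the formulas of Deligne--Ribet and Cassou-Nogu\`es. The auxiliary $T$-smoothing is precisely what guarantees the integrality $\zeta_{R,T}(\sigma,0)\in\mathbb{Z}$ and eliminates unwanted torsion throughout the construction.

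The heart of the argument is to produce a cuspidal Hilbert modular form congruent to this Eisenstein series modulo a large power of $p$. The associated Galois representation is globally irreducible but reducible modulo $p^n$, and a lattice argument extracts a nontrivial extension class in a cohomology group of the form $H^1(G_F, \mathbb{Z}_p(1)\otimes\chi)$ for an appropriate character $\chi$ cut out by the Hecke eigensystem. Via local and global Kummer theory, this class produces an element $u_T\in H^\ast$, and the identification of the Eisenstein constant term with $\zeta_{R,T}(\sigma,0)$ is engineered so that one automatically obtains
\[\ord_\mathfrak{P}(\sigma(u_T)) = \zeta_{R,T}(H/F,\sigma,0).\]

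To check that $u_T$ lies in $U_\mathfrak{p}$ (i.e.\ that no place $\mathfrak{P}\nmid \mathfrak{p}$ contributes to $|u_T|_\mathfrak{P}$) and that $u_T\equiv 1\pmod T$, one must control the ramification of the cocycle at every finite place distinct from $\mathfrak{p}$. This reduces to controlling the level structure and the local components of the interpolating cusp form: the congruence must be unramified outside $\mathfrak{p}$, with the $T$-smoothing responsible for the prescribed congruence. The hypothesis that $\mathfrak{p}$ splits completely in $H$ is used to interpret the local class at $\mathfrak{p}$ as a genuine $\mathfrak{p}$-unit in $H$ rather than just in the completion.

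The principal obstacle is obtaining the result at the prime $2$. Ribet-style congruences naturally introduce ambiguities from $\pm 1$ signs, $2$-torsion in class groups, and Hecke-eigenspace decompositions, and these ambiguities are exactly what the deep additional work of \cite{BrumerstarkoverZ} is required to resolve beyond the away-from-$2$ result in \cite{Brumerstark}. Assembling the local statements at each rational prime $l$ into an integral global unit further requires a careful compatibility argument across the tower, using the functorial behaviour of the partial zeta functions under change of $R$, $T$, and the field $H$.
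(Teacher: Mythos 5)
The paper does not contain its own proof of this statement. Conjecture~\ref{Conj2.5} is quoted from Gross's formulation of the Brumer--Stark conjecture, and immediately after stating it the paper simply cites the external theorems that establish it: Dasgupta--Kakde \cite[Theorem 1.2]{Brumerstark} (away from $2$) and Dasgupta--Kakde--Silliman--Wang \cite[Theorem 1.1]{BrumerstarkoverZ} (over $\mathbb{Z}$). Everything in the present paper takes the existence and uniqueness of $u_T$ as a black box and is concerned only with comparing it to the cohomological construction $u_2$. So there is no internal argument to compare your proposal against; what you have written is a high-level summary of the strategy in those two cited papers.

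As a summary it has the right atmosphere (group-ring-valued Eisenstein series, Ribet-style Eisenstein--cuspidal congruences, lattice construction of Galois cohomology classes, the $T$-smoothing hypothesis, the genuine extra difficulty at $2$), but it misrepresents the mechanism in one substantial way. Dasgupta--Kakde do not directly ``extract $u_T$ via Kummer theory'' from the congruence cocycle. Rather, the cohomology classes produced by the lattice argument are used to certify lower bounds on a ray class group, leading to a Kurihara-type Fitting-ideal statement for the Pontryagin dual of the minus part of the $T$-smoothed class group (``Strong Brumer--Stark''); the Brumer--Stark unit $u_T$ is then deduced from that Fitting-ideal containment by a separate, essentially homological argument (going back to Greither--Popescu). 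Your sketch also omits the reduction to the rank-one case and the exact use of the auxiliary condition on $T$ needed to kill roots of unity. None of these would be expected in full detail in a blind sketch, but as written your proposal would not close to a proof, and in any case you should be aware that the correct ``proof'' for purposes of this paper is simply a citation.
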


Our assumption on $T$ implies that there are no nontrivial roots of unity in $H$ that are congruent to $1$ modulo $T$. Thus, the $\mathfrak{p}$-unit, if it exists, is unique. Note also that our $u_T$ is actually the inverse of the $u$ in \cite[Conjecture $7.4$]{MR931448}.

The conjectural element $u_T \in U_{\mathfrak{p}}$ satisfying Conjecture \ref{Conj2.5} is called the Brumer--Stark unit for the data $(S,T,H,\mathfrak{P})$. This conjecture has been recently proved, away from $2$, by Dasgupta--Kakde in \cite[Theorem 1.2]{Brumerstark} and over $\mathbb{Z}$ by Dasgupta--Kakde--Silliman--Wang in \cite[Theorem 1.1]{BrumerstarkoverZ}. It is convenient for us to work with the following element of $H^\ast[G]$. We define 
\[ u_\mathfrak{p} = \sum_{\sigma \in G} u_\mathfrak{p}(\sigma) \otimes \sigma^{-1} = \sum_{\sigma \in G} \sigma( u_T) \otimes \sigma^{-1}  \in H^\ast[G] . \]

There have been three formulas conjectured for the Brumer--Stark unit $u_\mathfrak{p}$ in $F_\mathfrak{p}^\ast \otimes \mathbb{Z}[G]$. The first, by Dasgupta in \cite{MR2420508}, is a $p$-adic analytic formula which we denote by $u_1$. The other two formulas were defined by Dasgupta--Spie\ss \ in \cite{MR3861805} and \cite{MR3968788}, we denote these by $u_2$ and $u_3$ respectively. Both these formulas are cohomological in nature and are defined using an Eisenstein cocyle. Joint work of the author with Dasgupta in \cite{equivformulas} proves that these three formulas are equal. In this paper, we work with the cohomological formula defined in \cite{MR3861805} and justify this choice after Proposition \ref{prop6.3}. To stay consistent with the notation in \cite{equivformulas}, we denote this formula by 
\[ u_2 = \sum_{\sigma \in G} u_2(\sigma) \otimes \sigma^{-1} \in F_\mathfrak{p}^\ast \otimes \mathbb{Z}[G] . \]
The formulation of $u_2$ depends on the choice of an infinite place $v$ of $F$. We fix this $v$ from now on. Dasgupta--Spie\ss \ have shown the following proposition concerning properties of their cohomological construction in \cite{MR3861805}. We require this proposition to prove the main results of this paper. We note that this proposition contains all the information we require and therefore in this paper we avoid working with the explicit definition of $u_2$. For the explicit construction, we refer readers to \cite{MR3861805}. We write
\[ \overline{T} = \{ \mathfrak{t} \ \text{prime of} \ F :  \mathfrak{t} \mid q  \ \text{where, for some} \ \mathfrak{q} \in T, \  \mathfrak{q} \mid q   \} . \]

\begin{proposition}[Proposition 6.3, \cite{MR3861805}]\label{prop6.3}
We have the following properties for $u_2$.
\begin{enumerate}[a)]
    \item For $\sigma \in G$ we have $\ord_\mathfrak{p}(u_2(\sigma)) = \zeta_{R,T}(\sigma, 0)$.
    
    \item Let $L/F$ be an abelian extension with $L \supseteq H$ and put $\mathcal{G}= \Gal(L/F)$. Assume that $L/F$ is unramified outside $S$ and that $\mathfrak{p}$ splits completely in $L$. Then we have
    \[ u_2(\sigma) = \prod_{\tau \in \mathcal{G}, \tau \mid_H = \sigma} u_2(L/F , \tau). \]
    
    \item Let $\mathfrak{r}$ be a nonarchimedean place of $F$ with $\mathfrak{r} \nin S \cup \overline{T}$. Then we have 
    \[ u_2(S \cup \{ \mathfrak{r} \} , \sigma) = u_2(S ,  \sigma) u_2(S , \sigma_\mathfrak{r}^{-1} \sigma)^{-1}. \]
    
    \item Assume that $H$ has a real archimedean place $w \nmid v$. Then $u_2( \sigma) =1$ for all $\sigma \in G$.
    
    \item Let $L/F$ be a finite abelian extension of $F$ containing $H$ and unramified outside $S$. Then we have 
\[ \rec_\mathfrak{p}(u_2( \sigma))= \prod_{\substack{\tau \in \Gal(L/F) \\ \tau \mid_H = \sigma^{-1}  }} \tau^{\zeta_{S,T}(L/F, \tau^{-1},0)}. \]
\end{enumerate}
\end{proposition}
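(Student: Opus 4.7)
The plan is to deduce each property directly from the cohomological construction of $u_2$ in \cite{MR3861805}, where $u_2$ is obtained by pairing an Eisenstein cocycle with a cycle manufactured from the datum $\sigma \in G$ via class field theory. I will not revisit the explicit construction; rather, each of the items (a)--(e) should be obtained by inspecting how this pairing behaves under a natural operation on the zeta, Galois, or Galois-module side.

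For (a), the $\mathfrak{p}$-adic order of the $F_\mathfrak{p}^\ast$-valued distribution defining $u_2(\sigma)$ is its total mass, which by the Shintani-type set-up computes the smoothed partial zeta value at $s=0$; this is the Siegel--Klingen theorem applied to the $(R,T)$-smoothed data. Property (b) is functorial: the cycle representing $\sigma$ at the level of $H/F$ is the image, under the natural surjection $\mathbb{Z}[\mathcal{G}] \to \mathbb{Z}[G]$, of the sum of the cycles attached to the lifts $\tau$ of $\sigma$, so multiplicativity of the pairing produces the product formula. Property (c) follows from the Euler factor identity: enlarging $S$ by an unramified prime $\mathfrak{r}$ inserts $(1 - \sigma_\mathfrak{r}^{-1}\N\mathfrak{r}^{-s})$ into the zeta data, and specialization at $s=0$ translates, at the level of $F_\mathfrak{p}^\ast \otimes \mathbb{Z}[G]$, into the ratio $u_2(S,\sigma) u_2(S, \sigma_\mathfrak{r}^{-1}\sigma)^{-1}$. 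For (d), a real archimedean place $w \nmid v$ of $H$ causes the local data at the place of $F$ below $w$ to degenerate in the Eisenstein cocycle construction (which depends on a choice of orientation at each archimedean place of $F$ other than $v$); this forces the whole pairing to vanish and hence $u_2(\sigma) = 1$.

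The main obstacle is property (e), the reciprocity identity for $\rec_\mathfrak{p}(u_2(\sigma))$. The approach is to work at the level of $L$ rather than $H$, so that each lift $\tau$ of $\sigma^{-1}$ in $\Gal(L/F)$ carries an independently defined $u_2(L/F,\tau)$, and then apply the local Artin map term-by-term. By local class field theory, $\rec_\mathfrak{p}$ converts the multiplicative $F_\mathfrak{p}^\ast$-valued distribution defining $u_2$ into an additive, $\Gal(L/F)$-valued one, and its pairing against a given lift $\tau$ yields $\tau$ raised to an exponent matching the smoothed zeta value $\zeta_{S,T}(L/F,\tau^{-1},0)$; the extra Euler factor $(1-\N\mathfrak{p}^{-s})$ appears because one is now integrating over all of $F_\mathfrak{p}^\ast$ rather than only its units, which is precisely what converts $\zeta_{R,T}$ into $\zeta_{S,T}$. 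The delicate points will be tracking the inversion $\sigma \leftrightarrow \sigma^{-1}$ introduced by the duality inherent in the cocycle-cycle pairing, verifying that the $T$-smoothing factors pass cleanly through both the Artin map and the change-of-field operation from (b), and finally recovering the formula for $H$ from the intermediate formula for $L$ by a compatibility check.
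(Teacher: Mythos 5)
The paper itself contains no proof of Proposition~\ref{prop6.3}: it is cited verbatim from Dasgupta--Spie\ss\ (\cite[Proposition 6.3]{MR3861805}), and the surrounding text explicitly says it ``avoid[s] working with the explicit definition of $u_2$'' and refers the reader to \cite{MR3861805} for the construction. There is therefore no argument in the present paper against which to compare your sketch.

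As a standalone attempt, what you have written is a plausible roadmap but not a proof. You appeal to the Eisenstein cocycle, a Shintani-type measure, the cocycle--cycle pairing, and orientation data at archimedean places, but none of these objects are made concrete, and each of (a)--(e) is asserted rather than derived from the definitions. Parts (d) and (e) are the ones that most need justification. For (d), you must exhibit precisely how a real place $w \nmid v$ enters the cocycle so that the vanishing is forced, not merely plausible; your phrase ``causes the local data to degenerate'' is a restatement of the conclusion. For (e), you correctly flag the delicate points --- the inversion $\sigma \leftrightarrow \sigma^{-1}$ inherent in the pairing, the $T$-smoothing, the extra $\mathfrak{p}$-Euler factor converting $\zeta_{R,T}$ into $\zeta_{S,T}$, and the descent from $L$ back to $H$ --- but you do not carry out any of the bookkeeping, and each of these is a place where a sign or index error could silently invalidate the statement. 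Since verifying any of this requires the explicit construction in \cite{MR3861805}, the right move here, as the paper itself does, is to cite \cite[Proposition 6.3]{MR3861805} rather than reprove it from scratch.
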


In the proposition above, we note the field extension or set of primes which $u_2$ is associated with where necessary. If nothing is noted, then $u_2$ is assumed to be associated with $H/F$ and $S$.

The key parts of the above proposition we require are $\textit{c)}$ and $\textit{e)}$. We note that $\textit{e)}$ has also been shown for $u_1$ in \cite{MR2420508}. However, $\textit{c)}$ has only been shown for $u_2$. The closest to this that one has for one of the other formulas is for $u_1$ where in \cite{MR2420508} Dasgupta proves that, under certain conditions, $\textit{c)}$ holds up to a root of unity for $u_1$. Since the aim of this paper is to reduce the root of unity ambiguity, we are therefore required to work with $u_2$. A further advantage of using $u_2$ is that it is formulated for all finite abelian extensions $H/F$ in which $\mathfrak{p}$ splits completely. In particular we don't require that $H$ is CM or even totally complex. Then \textit{d)} of Proposition \ref{prop6.3} gives that the formula is trivial if $H$ is not totally complex (away from $v$). We have the following conjecture due to Dasgupta--Spie\ss .
\begin{conjecture}[Conjecture 6.1, \cite{MR3861805}]\label{bsformconj}
We have a formula for the Brumer--Stark unit. More precisely,  
\[ u_2 = u_\mathfrak{p} . \]
\end{conjecture}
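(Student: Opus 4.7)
The plan is to follow the three-stage program outlined in the introduction, the key tools being parts \textit{c)} and \textit{e)} of Proposition \ref{prop6.3} together with the integral Gross--Stark conjecture, which holds unconditionally thanks to the proofs of $\text{eTNC}^-$ in \cite{etnc} and \cite{etnconZ}. The starting point is the already-known fact, coming from \cite[Theorem 1.6]{intgrossstark} via the equality of formulas in \cite{equivformulas}, that the ratio $\zeta(\sigma) := u_2(\sigma)/u_\mathfrak{p}(\sigma)$ is a root of unity of bounded order; it therefore suffices to pin it down.

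First I would adapt Dasgupta's proof of \cite[Theorem 5.18]{MR2420508} from $u_1$ to $u_2$. The adaptation hinges on replacing the reciprocity identity used there for $u_1$ by part \textit{e)} of Proposition \ref{prop6.3}: expressing $\rec_\mathfrak{p}(u_2(\sigma))$ as a product of Frobenius elements in a tower $L/F$ weighted by the integers $\zeta_{S,T}(L/F,\tau^{-1},0)$ is precisely the setup into which the integral Gross--Stark formula feeds, while part \textit{c)} plays the role of the corresponding change-of-set property for $u_1$. For each odd rational prime $l$, the $l$-part of the integral Gross--Stark conjecture then yields $u_2(\sigma) = u_\mathfrak{p}(\sigma)$ in $F_\mathfrak{p}/D_l$, where $D_l \subset F_\mathfrak{p}^\ast$ contains no roots of unity of order divisible by $l$. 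Since $\zeta(\sigma)$ is already a root of unity, the conjunction of these statements over all odd $l$ forces $\zeta(\sigma)$ to be a $2$-power root of unity.

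The main obstacle is disposing of this residual $2$-power part, because simply transposing the odd-$l$ argument to $l=2$ introduces an additional $2$-power root of unity of its own. To sidestep this, I would prove the differenced statement that, for all $\sigma, \tau \in G$,
\[ u_2(\sigma) u_2(\tau)^{-1} = u_\mathfrak{p}(\sigma) u_\mathfrak{p}(\tau)^{-1} \quad \text{in } F_\mathfrak{p}/D_2. \]
The extra $2$-power factor coming from the $l=2$ analogue of the odd-prime argument is independent of $\sigma$ and hence cancels from this ratio, so the $2$-part of the integral Gross--Stark conjecture is enough to establish it. Combining this with the odd-$l$ results forces $\zeta(\sigma)$ to be a root of unity that is independent of $\sigma \in G$.

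Finally, to eliminate the surviving constant $\zeta$, I would invoke norm compatibility on both sides. Part \textit{b)} of Proposition \ref{prop6.3} expresses $u_2(H/F,\sigma)$ as the product of $u_2(L/F,\tau)$ over $\tau$ restricting to $\sigma$, and the Brumer--Stark units satisfy the analogous relation. Choosing $L/F$ to be a tower in which the degree $[L:H]$ is a multiple of the (bounded) order of the constant root of unity $\zeta_L$ at the $L/F$ level, the product relation expresses $\zeta$ at the $H/F$ level as $\zeta_L^{[L:H]} = 1$, giving $\zeta = 1$ and hence $u_2 = u_\mathfrak{p}$. I expect the differenced $l=2$ step to be the most delicate, since one must track carefully which auxiliary roots of unity survive reduction modulo $D_2$ and verify that they indeed depend on $\sigma$ only through a factor that is absorbed when forming the ratio.
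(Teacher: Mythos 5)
Your first two stages line up closely with what the paper actually does. Passing from the bounded-root-of-unity statement to a $2$-power root of unity via the odd-$l$ part of the integral Gross--Stark conjecture is exactly Theorem \ref{t:reduce to 2-power root of unity}, and the differenced statement $u_2(\sigma)u_2(\tau)^{-1}=u_\mathfrak{p}(\sigma)u_\mathfrak{p}(\tau)^{-1}$, used to force the residual constant $\gamma_\sigma$ to be independent of $\sigma$, is precisely Lemma \ref{l:moveterm}; both hinge, as you say, on Proposition \ref{prop6.3}~\textit{c)} and~\textit{e)} together with Proposition \ref{propGSequality} and Lemmas \ref{lemmanew5.17} and \ref{l:findprime}.

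Where you diverge from the paper is the last step, and there your argument has a real gap. You propose to climb a tower $L\supseteq H$ of degree $[L:H]$ divisible by the order of the constant $\zeta_L$ and then apply Proposition \ref{prop6.3}~\textit{b)}. But part~\textit{b)} requires $L/F$ abelian, unramified outside $S$, and with $\mathfrak{p}$ splitting completely; there is no reason such an $L$ with $2^b\mid[L:H]$ (where $2^b=|\mu^2(F_\mathfrak{p}^\ast)|$) need exist, since enlarging the conductor inside $S$ while keeping $\mathfrak{p}$ totally split puts you inside the quotient of a ray class group by the closure of the Frobenius at $\mathfrak{p}$, whose $2$-part you do not control. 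You would also need to know a priori that the constant $\gamma_L$ is well-defined at level $L$ and that the same uniform bound applies there. The paper instead exploits complex conjugation: Lemma \ref{l:cc} gives $u_2(\sigma)u_2(\sigma c)=1$ (this is where norm compatibility, i.e.~part~\textit{b)}, is actually used, via the degree-$2$ subextension $H/H^{\langle c\rangle}$), and the fact that complex conjugation inverts $U_\mathfrak{p}$ gives $u_\mathfrak{p}(\sigma)u_\mathfrak{p}(\sigma c)=1$, so the constant satisfies $\gamma^2=1$. The remaining sign is then killed by the observation that $D(\mathfrak{f},\mathfrak{g})\cap F\subseteq E_+(\mathfrak{f})$, a group of totally positive elements, so $-1\nin D(\mathfrak{f},\mathfrak{g})$; combined with Proposition \ref{propGSequality} this forces $\gamma=1$. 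That last ingredient — ruling out $-1$ by total positivity of the closure of global units — is entirely absent from your proposal and is what makes the endgame work without any choice of auxiliary tower.
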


Recent work of Dasgupta--Kakde has proved this conjecture, with $u_1$ replacing $u_2$, up to a root of unity under some mild assumptions. In particular, they prove the following theorem. Here and from this point on we let $p$ denote the rational prime such that $ \mathfrak{p} \mid p$.

\begin{theorem}[Theorem 1.6, \cite{intgrossstark}]\label{thmforform}
Suppose that
\begin{gather}
    \text{there exists} \ \mathfrak{q} \in S \ \text{where} \ \mathfrak{q} \ \text{is a prime of} \ F \ \text{that is unramified in} \ H \nonumber \\ \text{and whose associated Frobenius} \ \sigma_\mathfrak{q} \ \text{is a complex conjugation in} \ H,
    \label{assumptiononcc}
\end{gather}
and
\begin{equation}\label{assumptionforDK}
    p \ \text{is odd and} \ H \cap F(\mu_{p^\infty}) \subset H^+, \ \text{the maximal totally real subfield of} \ H.
\end{equation}
Then, Conjecture \ref{bsformconj} for $u_1$ holds up to multiplication by a root of unity in $F_\mathfrak{p}^\ast$. i.e.,
\[ u_1 = u_\mathfrak{p} \ \text{in} \ (F_\mathfrak{p}^\ast / \mu(F_\mathfrak{p}^\ast)) \otimes \mathbb{Z}[G] . \]
Here, we write $\mu(F_\mathfrak{p}^\ast)$ for the group of roots of unity of $F_\mathfrak{p}^\ast$.
\end{theorem}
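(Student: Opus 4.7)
The plan is a two-step reduction. First, I would appeal to Dasgupta's \cite[Theorem 5.18]{MR2420508}, which shows that the $p$-part of the integral Gross--Stark conjecture implies Conjecture \ref{bsformconj} for $u_1$ up to a root of unity, provided assumption (\ref{assumptiononcc}) holds. Second, I would invoke the proof of the $p$-part of the integral Gross--Stark conjecture itself, carried out in \cite{intgrossstark} under assumption (\ref{assumptionforDK}). Combining the two inputs yields the theorem.

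For the first step, the Shintani-theoretic formula for $u_1(\sigma)$ from \cite[Definition 3.18]{MR2420508}, after applying $\rec_\mathfrak{p}$, can be rewritten as a derivative at $s=0$ of a $p$-adic $L$-function attached to the character of $G$ dual to $\sigma$. The integral Gross--Stark conjecture predicts that exactly this derivative computes $\rec_\mathfrak{p}(u_\mathfrak{p}(\sigma))$. Comparing the two identities shows that $u_1(\sigma)/u_\mathfrak{p}(\sigma)$ lies in the kernel of $\rec_\mathfrak{p}$ on $F_\mathfrak{p}^\ast/\mu(F_\mathfrak{p}^\ast)$; the hypothesis that $S$ contains a prime whose Frobenius is complex conjugation is what guarantees, via a character-by-character comparison, that this kernel contribution is trivial. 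This step is essentially a book-keeping exercise in the spirit of Gross's original formulation.

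For the second step, I would follow the Ribet-style argument of \cite{intgrossstark}: construct a group-ring-valued Hilbert modular Eisenstein series for $\mathrm{GL}_2/F$ whose constant terms encode the values $\zeta_{R,T}(\sigma,0)$, promote it to a cuspidal eigenform by matching Hecke eigenvalues modulo a high power of $p$, and extract the required $p$-adic $L$-invariant identity from the two-dimensional Galois representation attached to the resulting eigenform. Assumption (\ref{assumptionforDK}) enters precisely in making the Iwasawa-theoretic input on the relevant Teichm\"uller component tractable, by ensuring that $H$ and the cyclotomic tower interact cleanly on the minus part. The main obstacle is clearly this second step, since engineering the Eisenstein--cuspidal congruence and reading off the arithmetic consequence from the associated Galois representation is the technical core of \cite{intgrossstark}; the first step, by comparison, is a concrete manipulation of $p$-adic Mellin transforms and Shintani cocycles that has been in place since \cite{MR2420508}.
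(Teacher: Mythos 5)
Your two-step reduction is exactly the structure the paper ascribes to this result: Theorem 1.6 of \cite{intgrossstark} is obtained by combining Dasgupta's \cite[Theorem 5.18]{MR2420508} (which shows the $p$-part of the integral Gross--Stark conjecture implies the formula up to a root of unity, given the assumption on $S$) with the proof of that $p$-part in \cite{intgrossstark} itself. The paper does not re-prove this theorem; it simply cites it, and the introduction records precisely the citation chain you identify. So at the level of logical structure your proposal is correct.

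That said, your description of how assumption (\ref{assumptiononcc}) enters is not accurate, and the paper's \S3 and \S5 make clear what its actual role is. It is not a ``character-by-character comparison'' that makes a kernel contribution vanish. The integral Gross--Stark conjecture only pins down $u_\mathfrak{p}(\sigma)$ modulo the group $D(\mathfrak{f},\mathfrak{g})$ (Proposition \ref{propGSequality}), and this kernel is genuinely nontrivial. The mechanism for shrinking it is the horizontal Iwasawa theory of \cite[\S 5]{MR2420508}: one adjoins auxiliary primes $\mathfrak{r}_1,\dots,\mathfrak{r}_s$ with Frobenius equal to complex conjugation to $S$, which shrinks $D(\mathfrak{f},\mathfrak{r}_1\cdots\mathfrak{r}_s)$ away from roots of unity, and then one must descend back to the original $S$. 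That descent requires the norm relation
\[
u_1(S \cup \{ \mathfrak{r} \}, \sigma ) =  u_1(S, \sigma)^2,
\]
and in \cite{MR2420508} this relation for $u_1$ is proved only under the hypothesis that $S$ already contains a prime with complex-conjugation Frobenius. That is the sole place (\ref{assumptiononcc}) is used, and it is precisely the point Honnor removes in \S3 via Lemma \ref{l:cc} for $u_2$ and the equality $u_1=u_2$ from \cite{equivformulas}. Also, your phrasing that $\rec_\mathfrak{p}(u_1(\sigma))$ is ``a derivative at $s=0$ of a $p$-adic $L$-function'' conflates the integral Gross--Stark conjecture (an identity of Stickelberger elements in $I/I^2$, Conjecture \ref{intgs}) with the analytic Gross--Stark conjecture about $\mathcal{L}$-invariants and $p$-adic $L$-derivatives; the relevant statement here is the integral one. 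None of this invalidates your proposal as a roadmap of what is being cited, but the ``book-keeping exercise'' characterization of the first step understates and misidentifies the actual work, which is exactly the horizontal Iwasawa machinery that the present paper then refines.
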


The key ingredient in the proof of the above theorem is Dasgupta--Kakde's proof of the $p$-part of the integral Gross--Stark conjecture. By the main result of \cite{equivformulas} (i.e., that $u_1=u_2=u_3$) we have that Theorem \ref{thmforform} holds for $u_2$ and $u_3$ as well. We also remark that by following the arguments of \cite{MR2420508} and using Proposition \ref{prop6.3} one can show that Theorem \ref{thmforform} holds for $u_2$ without using \cite{equivformulas}. However, it is not currently possible to prove Theorem \ref{thmforform} for $u_3$ without the work in \cite{equivformulas}.

In this paper we remove the assumption (\ref{assumptiononcc}) from Theorem \ref{thmforform} and reduce the root of unity ambiguity. In particular, the main result of this paper is the following theorem.

\begin{theorem}\label{mainthmforpaper}
Suppose that (\ref{assumptionforDK}) holds. Then, Conjecture \ref{bsformconj} holds. I.e.,
\[ u_2 = u_\mathfrak{p}  . \]
\end{theorem}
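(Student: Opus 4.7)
The proof separates into two nearly orthogonal tasks: first, removing the Frobenius hypothesis (\ref{assumptiononcc}) from Theorem \ref{thmforform} for $u_2$, and then controlling the root-of-unity ambiguity. Throughout I will work exclusively with $u_2$, because the functorial machinery in Proposition \ref{prop6.3} — specifically parts c), d) and e) — is crucial and is not currently available for $u_1$ or $u_3$. Once the theorem is proved for $u_2$, the equality $u_1 = u_2 = u_3$ from \cite{equivformulas} delivers the full statement for the other two formulas.

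For the hypothesis removal, I would pass to an auxiliary finite abelian extension $L/F$ containing $H$, chosen so that some prime in the relevant set $S$ has Frobenius equal to complex conjugation in $L$, while $\mathfrak{p}$ still splits completely. Theorem \ref{thmforform} applies directly to $L$, and Proposition \ref{prop6.3} b), together with the corresponding norm-compatibility for $u_\mathfrak{p}$, descends the comparison to $H/F$. The key subtlety is that intermediate fields in such a tower need not be totally complex, but part d) of Proposition \ref{prop6.3} trivialises the formula precisely on those layers, while part c) controls the effect of adjoining auxiliary primes, so the descent goes through cleanly.

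For the ambiguity, I adapt Dasgupta's strategy from \cite[Theorem 5.18]{MR2420508}. Fix an odd rational prime $l$. The $l$-part of the integral Gross--Stark conjecture (now a theorem via $\text{eTNC}^-$ over $\mathbb{Z}$ from \cite{etnconZ}, or away from $2$ from \cite{etnc}) gives an explicit description of $\rec_\mathfrak{p}(u_\mathfrak{p}(\sigma))$ in terms of $p$-adic $L$-values; Proposition \ref{prop6.3} e) gives an identical expression for $\rec_\mathfrak{p}(u_2(\sigma))$. Matching the two and projecting modulo roots of unity of $l$-power order yields $u_2(\sigma) \equiv u_\mathfrak{p}(\sigma) \pmod{D_l}$. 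Since the already-proven Theorem \ref{thmforform} for $u_2$ guarantees that $u_2(\sigma)/u_\mathfrak{p}(\sigma)$ is a root of unity in $F_\mathfrak{p}^\ast$, intersecting this congruence over all odd $l$ forces the ratio to be a $2$-power root of unity.

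The remaining $2$-power ambiguity is the main obstacle, since a direct replay of the odd-$l$ argument with $l = 2$ introduces its own $2$-power root of unity at the comparison step. I would instead apply the argument to the ratio $u_2(\sigma) u_2(\tau)^{-1}$ for arbitrary $\sigma, \tau \in G$: the extraneous $2$-power factor cancels between numerator and denominator, and the $2$-part of integral Gross--Stark yields
\[ u_2(\sigma) u_2(\tau)^{-1} \equiv u_\mathfrak{p}(\sigma) u_\mathfrak{p}(\tau)^{-1} \pmod{D_2}. \]
Combined with the odd-part result, the error $\epsilon(\sigma) := u_2(\sigma)/u_\mathfrak{p}(\sigma)$ is a $2$-power root of unity \emph{independent} of $\sigma$. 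To eliminate this single constant, I invoke the norm-compatibility of Proposition \ref{prop6.3} b) together with the corresponding norm relation for $u_\mathfrak{p}$: choosing an auxiliary tower $L/H/F$ in which $\mathfrak{p}$ still splits completely and $[L:H]$ is divisible by the order of the $2$-power roots of unity in $F_\mathfrak{p}^\ast$, the constant $\epsilon$ for $H$ gets expressed as a high power of the analogous constant for $L$ and is thereby forced to be trivial. The delicate point, and the main obstacle, is arranging this tower so that the hypotheses on $S$, $T$ and on splitting of $\mathfrak{p}$ are preserved throughout.
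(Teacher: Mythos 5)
Your proposal correctly identifies the broad architecture — hypothesis removal, odd-prime reduction, $2$-primary part via ratios, final constant elimination — and the odd-$l$ reduction and the ratio trick for the $2$-part match the paper's Lemmas \ref{lemmanew5.17}, \ref{l:findprime}, \ref{l:moveterm} and Theorem \ref{t:reduce to 2-power root of unity} quite closely. However, the first and last steps as you describe them do not go through.

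For the hypothesis removal you propose ascending to an abelian extension $L\supseteq H$ in which some prime of $S$ has complex-conjugation Frobenius. This is circular: if $H$ is CM with complex conjugation $c_H$ and $L\supseteq H$ is CM with complex conjugation $c_L$, then $c_L|_H = c_H$, so $\sigma_\mathfrak{q} = c_L$ in $\Gal(L/F)$ already forces $\sigma_\mathfrak{q}|_H = c_H$, i.e., assumption~(\ref{assumptiononcc}) already held for $H$. The paper does the opposite: it descends to the \emph{subfield} $H^{\langle c\rangle}$ fixed by complex conjugation, where part d) of Proposition \ref{prop6.3} makes $u_2$ trivial (since $H^{\langle c\rangle}$ has a real place away from $v$), and then part b) applied to the tower $H/H^{\langle c\rangle}/F$ gives the inversion relation $u_2(\sigma c)=u_2(\sigma)^{-1}$ of Lemma \ref{l:cc}. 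That relation is what replaces the assumption in Dasgupta's argument, yielding Theorem \ref{t:cc}. Adding a prime $\mathfrak{r}$ with $\sigma_\mathfrak{r}=c$ to $S$ is harmless because c) of Proposition \ref{prop6.3} plus Lemma \ref{l:cc} gives $u_2(S\cup\{\mathfrak{r}\},\sigma)=u_2(S,\sigma)^2$ without any auxiliary extension of $H$.

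For the final step, your proposed tower $L/H/F$ with $[L:H]$ divisible by $|\mu^2(F_\mathfrak{p}^\ast)|$, $\mathfrak{p}$ split completely, $L/F$ unramified outside $S$, $L$ CM, and (\ref{assumptionforDK}) preserved for $L$, has no existence argument, and it is not clear such an $L$ always exists: once you impose that $\mathfrak{p}$ splits completely and the ramification is confined to $S$, the available $2$-power degree over $H$ is governed by the narrow ray class group modulo the subgroup generated by $\mathfrak{p}$, whose $2$-part can be small. Moreover, to conclude $\gamma_L^{[L:H]}=1$ you would need to already control $\gamma_L$, which reintroduces the problem one level up. The paper avoids all of this: once $\gamma_\sigma=\gamma$ is shown independent of $\sigma$ (your ratio argument, which is correct), combining $u_2(\sigma)u_2(\sigma c)=1$ (Lemma \ref{l:cc}) with $u_\mathfrak{p}(\sigma)u_\mathfrak{p}(\sigma c)=1$ (inversion under complex conjugation on $U_\mathfrak{p}$) gives $\gamma^2=1$, so $\gamma\in\{\pm1\}$; then since $D(\mathfrak{f},\mathfrak{g})\cap F^\ast\subseteq E_+(\mathfrak{f})$ consists of totally positive elements, $-1\notin D(\mathfrak{f},\mathfrak{g})$, and Proposition \ref{propGSequality} (which places $\gamma$ in $D(\mathfrak{f},\mathfrak{g})$) rules out $\gamma=-1$. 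The complex-conjugation inversion relation is thus the single key lemma doing double duty — both removing (\ref{assumptiononcc}) and killing the final sign — and it is the piece your proposal is missing.
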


The proof of the above theorem is done in three steps. In \S3 we show how one can remove the assumption (\ref{assumptiononcc}) from Theorem \ref{thmforform} and provide the lemma which allows us not to require it in our work. When considering the root of unity ambiguity we begin by reducing, in \S 5, the ambiguity to a $2$-power root of unity. In \S6 we then remove this possible $2$-power root of unity and thus complete our proof of Theorem \ref{mainthmforpaper}.


\section{Action by complex conjugation}

In this section we show how one can remove the assumption (\ref{assumptiononcc}) from Theorem \ref{thmforform}. Furthermore, the work in this section allows for the proof of Theorem \ref{mainthmforpaper} without such an assumption.

The assumption (\ref{assumptiononcc}) appears only in one place in the proof of Theorem \ref{thmforform}. Namely it is used in the proof of \cite[Theorem 5.18]{MR2420508} which shows that the $p$-part of the integral Gross--Stark conjecture implies Theorem \ref{thmforform}. We briefly review the ideas of \cite[Theorem 5.18]{MR2420508}, this is made more explicit in the next sections where we adapt these ideas to remove the root of unity ambiguity. Dasgupta shows that it is possible to find a finite number of primes $\mathfrak{r}_1, \dots , \mathfrak{r}_s$, whose associated Frobenius elements are complex conjugation, such that the $p$-part of the integral Gross--Stark conjecture implies, 
\[ u_1(S\cup \{ \mathfrak{r}_1 , \dots , \mathfrak{r}_s \})= u_\mathfrak{p}(S\cup \{ \mathfrak{r}_1 , \dots , \mathfrak{r}_s \}) \quad \text{in} \quad (F_\mathfrak{p}^\ast/ \mu( F_\mathfrak{p}^\ast)) \otimes \mathbb{Z}[G]. \]
It is then required to show that this equality with additional places in $S$ implies the equality without these additional places. Let $c$ denote the complex conjugation in $H$ and let $\mathfrak{r}$ be a prime of $F$ with $\sigma_\mathfrak{r}=c$. Since complex conjugation acts as inversion on $U_\mathfrak{p}$ one can calculate
\[ u_\mathfrak{p}(S \cup \{ \mathfrak{r} \}, \sigma )= u_\mathfrak{p}(S, \sigma) u_\mathfrak{p}(S, \sigma \sigma_\mathfrak{r}^{-1})^{-1} = u_\mathfrak{p}(S, \sigma)^2 .   \]
In the proof of \cite[Theorem 5.18]{MR2420508} it is shown that the above equality, with $u_\mathfrak{p}$ replaced by $u_1$ holds if $S$ contains a prime whose associated Frobenius is complex conjugation. This is the only point where the the assumption (\ref{assumptiononcc}) is required. To remove this assumption we prove the following theorem. Note that there are no assumptions on $S$ required here. 

\begin{theorem}\label{t:cc}
    Let $\mathfrak{r}$ be a prime of $F$ with $\sigma_\mathfrak{r}=c$. Then,
    \[ u_1(S \cup \{ \mathfrak{r} \}, \sigma )=  u_1(S, \sigma)^2 . \]
\end{theorem}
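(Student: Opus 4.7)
The natural plan is to translate the statement about $u_1$ into one about $u_2$ and then exploit the additional functorial flexibility that $u_2$ enjoys but $u_1$ does not. By the main theorem of \cite{equivformulas} we have $u_1 = u_2$ in $F_\mathfrak{p}^\ast \otimes \mathbb{Z}[G]$, so it suffices to prove the analogous identity
\[ u_2(S \cup \{\mathfrak{r}\},\, \sigma) \;=\; u_2(S,\, \sigma)^2 \qquad \text{for all } \sigma \in G. \]
Proposition \ref{prop6.3}(c), applied to $\mathfrak{r}$ with $\sigma_\mathfrak{r} = c$ and using that $G$ is abelian with $c^2 = 1$, reduces this to the single inversion identity
\[ u_2(S,\, \sigma c) \;=\; u_2(S,\, \sigma)^{-1} \qquad \text{for all } \sigma \in G. \qquad(\ast) \]

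To establish $(\ast)$ I would exploit the fact, emphasised in the introduction, that $u_2$ is defined for any finite abelian extension of $F$ in which $\mathfrak{p}$ splits completely, not just CM extensions. Let $H^+$ denote the maximal totally real subfield of $H$, so that $\Gal(H/H^+) = \langle c \rangle$ and $\mathfrak{p}$ splits completely in $H^+ \subseteq H$. Applying Proposition \ref{prop6.3}(b) to the tower $F \subset H^+ \subset H$, with $H^+$ in the role of the small field and $H$ in the role of the large field, yields for every $\bar\sigma \in \Gal(H^+/F)$ and every lift $\sigma \in G$ the relation
\[ u_2(H^+/F,\, \bar\sigma) \;=\; u_2(H/F,\, \sigma)\,\cdot\, u_2(H/F,\, \sigma c). \]
Since $H^+$ is totally real, Proposition \ref{prop6.3}(d) forces the left-hand side to equal $1$ as soon as $H^+$ admits a real archimedean place not lying above the fixed place $v$; this is automatic whenever $F$ has at least two archimedean places. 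This establishes $(\ast)$ and hence the theorem.

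The only genuine obstacle is the exceptional case $F = \mathbb{Q}$, where every real place of $H^+$ lies above the unique archimedean place $v$ and Proposition \ref{prop6.3}(d) cannot be invoked directly for $H^+/F$. I would handle this by base changing to a totally real auxiliary extension $F'/F$ with at least two archimedean places in which some prime above $\mathfrak{p}$ splits completely in $HF'$, running the argument above over $F'$ to deduce the inversion identity for the formula attached to $HF'/F'$, and then transporting it back to $H/F$ via Proposition \ref{prop6.3}(b) applied to the tower $H \subseteq HF'$ together with the usual compatibility of partial zeta values under inflation. Once $(\ast)$ is known in all cases, the theorem follows from the first paragraph.
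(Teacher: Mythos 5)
Your overall strategy matches the paper's: pass from $u_1$ to $u_2$ via \cite{equivformulas}, reduce to the inversion identity $u_2(\sigma c) = u_2(\sigma)^{-1}$ using the $S$-enlarging relation (Proposition~\ref{prop6.3}(c) for $u_2$; the paper quotes the $u_1$ analogue from \cite[Theorem 5.18]{MR2420508}), and then prove the inversion identity by applying Proposition~\ref{prop6.3}(b) and~(d) to a tower of fields. The paper isolates exactly this inversion identity as Lemma~\ref{l:cc}.

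However, there is a genuine gap in your choice of intermediate field. You take $H^+$, the maximal totally real subfield, and assert ``so that $\Gal(H/H^+) = \langle c\rangle$.'' That identity holds if and only if $H$ is CM, and the paper's standing assumption is only that $H$ is totally complex and contains a CM subfield, which is strictly weaker: the complex conjugations attached to different archimedean places need not all coincide, in which case $[H:H^+] > 2$ and your application of Proposition~\ref{prop6.3}(b) yields a product over more than two cosets, from which the inversion identity does not follow. The paper instead uses $H^{\langle c\rangle}$, the fixed field of the \emph{single} involution $c$, for which $\Gal(H/H^{\langle c\rangle}) = \langle c\rangle$ holds unconditionally. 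This field need not be totally real, but it still has a real archimedean place (below any complex place $w$ of $H$ whose local conjugation is $c$), and that is all Proposition~\ref{prop6.3}(d) requires, provided that real place does not lie above $v$ --- which is exactly the hypothesis that Lemma~\ref{l:cc} carries. Your two approaches coincide precisely when $H$ is CM.

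Regarding the $F = \mathbb{Q}$ case: you correctly notice that the place-over-$v$ condition can fail there, but your proposed base-change workaround is dubious as written. Proposition~\ref{prop6.3}(b) relates $u_2$ for different top fields over the \emph{same} base $F$; it does not relate $u_2(HF'/F', \cdot)$ to $u_2(H/F, \cdot)$, so ``transporting back'' through it is not available without a separate base-change compatibility statement that the paper does not record. The paper itself sidesteps this by building the condition $w \nmid v$ into Lemma~\ref{l:cc} and not addressing the $F=\mathbb{Q}$ situation explicitly.
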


We begin with the following lemma. The author would like to credit Michael Spie\ss \ with central idea for the proof of the lemma below. Recall that in the definition of $u_2$ we fix $v$, an infinite place of $F$.

\begin{lemma}\label{l:cc}
    Let $c \in G$ denote a complex conjugation in $H$. If the complex place $w$ associated to $c$ does not lie above $v$, then
    \[ u_2(\sigma c)=u_2(\sigma)^{-1} . \]
\end{lemma}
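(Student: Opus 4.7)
The plan is to apply functoriality of the formula $u_2$ under restriction to the fixed field of $c$, combined with the triviality statement (d) of Proposition \ref{prop6.3}. Let $H^c = H^{\langle c \rangle}$ denote the subfield of $H$ fixed by the complex conjugation $c$, so that $[H : H^c] = 2$ and $\Gal(H/H^c) = \langle c \rangle$. First I would check that the hypotheses of the setup pass from $H/F$ to $H^c/F$: the prime $\mathfrak{p}$ splits completely in $H^c$ because it does so in $H$, the set $R$ contains all primes ramified in $H^c$ because it contains those ramified in $H$, and $\mathfrak{p} \notin R$ by assumption. Thus $u_2(H^c/F, \cdot)$ is defined with respect to the same data $(R, T, \mathfrak{p}, v)$.

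Next I would verify that $H^c$ has a real archimedean place not lying above $v$. The place $w$ of $H$ associated to $c$ has decomposition group $\langle c \rangle \subseteq \Gal(H/H^c)$, so its restriction $w' := w\big|_{H^c}$ to $H^c$ has completion fixed by complex conjugation, i.e., $H^c_{w'} = \mathbb{R}$. Since $w \nmid v$ as places over $F$, we also have $w'|_F = w|_F \neq v$, so $w' \nmid v$. Proposition \ref{prop6.3}(d), applied to $H^c/F$ with the real place $w'$, then gives
\[ u_2(H^c/F, \bar\sigma) = 1 \quad \text{for all } \bar\sigma \in \Gal(H^c/F). \]

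Finally I would apply the functoriality property \ref{prop6.3}(b), taking the roles of ``$H$'' and ``$L$'' in that statement to be $H^c$ and $H$ respectively (so $L \supseteq H^c$ is indeed satisfied, $L/F$ is unramified outside $S$, and $\mathfrak{p}$ splits completely in $L$). For $\bar\sigma = \sigma\big|_{H^c}$, the set of $\tau \in \Gal(H/F)$ with $\tau\big|_{H^c} = \bar\sigma$ is precisely $\{\sigma, \sigma c\}$, so
\[ 1 \; = \; u_2(H^c/F, \bar\sigma) \; = \; \prod_{\substack{\tau \in \Gal(H/F) \\ \tau|_{H^c} = \bar\sigma}} u_2(H/F, \tau) \; = \; u_2(\sigma) \, u_2(\sigma c), \]
which rearranges to $u_2(\sigma c) = u_2(\sigma)^{-1}$, as desired.

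This proof is essentially an assembly of the two functorial properties of $u_2$, and I do not anticipate any genuine obstacle: the only thing that needs care is the bookkeeping of hypotheses when passing between $H$ and $H^c$ (in particular verifying that the fixed auxiliary data $v$, $R$, $T$ remain admissible and that $w'$ is a real place not dividing $v$). The essential insight, as attributed to Spie\ss, is that the condition $w \nmid v$ forces the intermediate field $H^c$ to fall into the regime where Proposition \ref{prop6.3}(d) forces $u_2$ to vanish, thereby converting the cohomological formula's known behavior under complex conjugation into the stated identity.
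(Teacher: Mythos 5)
Your proof is correct and follows exactly the same strategy as the paper: pass to the fixed field $H^{\langle c\rangle}$, apply Proposition \ref{prop6.3}(d) there using the real place below $w$, and then use the functoriality (b) to convert triviality over $H^{\langle c\rangle}/F$ into the identity $u_2(\sigma)u_2(\sigma c)=1$. The only difference is that you spell out the bookkeeping (admissibility of $(R,T,\mathfrak{p},v)$ for $H^{\langle c\rangle}/F$ and the fact that $w'$ is real and $w'\nmid v$) that the paper leaves implicit.
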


\begin{proof}
    Let $H^{\langle c \rangle}$ denote the subfield of $H$ which is fixed by $c$. Note that $F \subseteq H^{\langle c \rangle} \subset H$. By $\textit{d)}$ of Proposition \ref{prop6.3} and since the complex place associated to $c$ does not lie above $v$,
    \[ u_2(H^{\langle c \rangle}/F, \sigma)=1, \]
    for all $\sigma \in \Gal(H^{\langle c \rangle}/F)$. Applying $\textit{b)}$ of Proposition \ref{prop6.3} with the tower of fields $H/H^{\langle c \rangle}/F$ we have
    \[ 1=u_2(H^{\langle c \rangle}/F, \sigma) = u_2(H/F,\sigma )u_2(H/F,\sigma c) . \]
    This proves the theorem.
\end{proof}

We remark that the choice of $v$ in $u_1$ refers to a choice of Shintani domain used in $u_1$; this is made clear in \cite{equivformulas}. 

\begin{proof}[Proof of Theorem \ref{t:cc}]
    In the proof of \cite[Theorem 5.18]{MR2420508} the following equality is shown,
    \[ u_1(S \cup \{ \mathfrak{r} \}, \sigma ) = u_1(S, \sigma)u_1(S, \sigma \sigma_{\mathfrak{r}}^{-1})^{-1} . \]
    In \cite{equivformulas} it is proved that $u_1=u_2$. Applying Lemma \ref{l:cc} with this equality and the fact that $\sigma_\mathfrak{r}=c$ gives the result.
\end{proof}

\section{The integral Gross--Stark conjecture}

The integral Gross--Stark conjecture or, as it is also known, Gross's tower of fields conjecture, is an integral version of the Gross--Stark conjecture. See \cite[Proposition 3.8, Conjecture 3.13]{MR656068} for the statement and further details on the Gross--Stark conjecture. Gross first stated the integral Gross--Stark conjecture in \cite{MR931448}. In this conjecture, we consider a tower of fields $L/H/F$ and as before, $F$ is totally real. We take $H$ and $L$ to be finite abelian extensions of $F$ that are CM fields such that $L$ contains $H$. Write $\mathcal{G} =\Gal(L/F)$. Recall that $S = R \cup \{ \mathfrak{p} \}$ where $\mathfrak{p}$ splits completely in $H/F$. The integral Gross--Stark conjecture gives a relationship between Brumer--Stark units and the Stickelberger element for $L/F$, $\Theta_{S,T}^{L/F}$. We define
\[ \Theta_{S,T}^{L/F} = \sum_{\sigma \in G} \zeta_{S,T} (L/F,\sigma , 0) \otimes \sigma^{-1} \in \mathbb{C}[\mathcal{G}] . \]
Letting $T$ be as in \S 2, we have $\Theta_{S,T}^{L/F} \in \mathbb{Z}[\mathcal{G}]$. We fix this choice from now on. Denote by 
\[ \rec_\mathfrak{p} : F_\mathfrak{p}^\ast \rightarrow \mathbb{A}_F^\ast \rightarrow \mathcal{G} \]
the reciprocity map of local class field theory. Since $H \subset H_\mathfrak{P} \cong F_\mathfrak{p}$, we can evaluate $\rec_\mathfrak{p}$ on $H^\ast$. Note that if $x\in H^\ast $, then $\rec_\mathfrak{p}(x) \in \Gal(L/H)$. Let $I$ denote the relative augmentation ideal associated to $\mathcal{G}$ and $G$, i.e., the kernel of the canonical projection
\[ \text{Aug}^\mathcal{G}_G : \mathbb{Z}[\mathcal{G}] \twoheadrightarrow \mathbb{Z}[G]. \]
We state the integral Gross--Stark conjecture.
\begin{conjecture}[Conjecture 7.6, \cite{MR931448}]\label{intgs}
Define 
\[ \rec_G(u_\mathfrak{p}) = \sum_{\sigma \in G} (\rec_\mathfrak{p} (u_\mathfrak{p}(\sigma))-1 ) \Tilde{\sigma}^{-1} \in I/I^2 , \]
where $\Tilde{\sigma} \in \mathcal{G}$ is any lift of $\sigma \in G$ and $u_\mathfrak{p} = \sum_{\sigma \in G} u_\mathfrak{p}(\sigma) \otimes \sigma^{-1}$ is the Brumer--Stark unit. Then
\begin{equation}\label{igsce}
    \rec_G(u_\mathfrak{p}) \equiv \Theta_{S,T}^{L/F} 
\end{equation}
in $I/I^2$.
\end{conjecture}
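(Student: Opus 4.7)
The plan is to deduce Conjecture \ref{intgs} from the minus part of the equivariant Tamagawa number conjecture ($\text{eTNC}^-$) for the CM extension $L/F$, rather than to attempt a direct analytic comparison between $\rec_\mathfrak{p}(u_\mathfrak{p})$ and $\Theta_{S,T}^{L/F}$ in $I/I^2$. The latter would force one to manipulate $p$-adic $L$-values against explicit local reciprocity computations, whereas routing through $\text{eTNC}^-$ packages these identifications into a single integral comparison of Fitting/characteristic invariants.

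First, I would invoke Burns's reduction in \cite[Corollary 4.3]{MR2336038}, which asserts that $\text{eTNC}^-$ for a finite CM extension $L/F$ of a totally real field implies the integral Gross--Stark conjecture for the tower $L/H/F$, for any intermediate CM field $H$ and any admissible $(S,T)$ of the type fixed in \S2. Applied to our setup, this turns (\ref{igsce}) into a consequence of the single statement $\text{eTNC}^-(L/F)$, eliminating the need to inspect the augmentation-ideal filtration by hand.

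Second, I would supply $\text{eTNC}^-(L/F)$ by splitting into odd and $2$-primary parts. Away from $2$, this is \cite[Theorem B]{etnc} of Bullach--Burns--Daoud--Seo, whose input is precisely the proof of Brumer--Stark away from $2$ in \cite{Brumerstark}. At the prime $2$, I would invoke \cite{etnconZ} of Dasgupta--Kakde--Silliman, which adapts the strategy of \cite{etnc} and rests on the integral Brumer--Stark result of Dasgupta--Kakde--Silliman--Wang in \cite{BrumerstarkoverZ}. Assembling the two halves yields $\text{eTNC}^-(L/F)$ as an integral (not merely rational) identity, and Burns's reduction then produces Conjecture \ref{intgs}.

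The real obstacle is not in any step above but lies inside the cited proofs of $\text{eTNC}^-$, which depend on Ritter--Weiss-style modules, Ribet's deformation of Eisenstein series to cusp forms, and delicate Fitting-ideal control of Selmer-type modules; in particular the $2$-part requires the full integral Brumer--Stark machinery of \cite{BrumerstarkoverZ}. In the context of the present paper Conjecture \ref{intgs} is used as a (now unconditional) input, so the proposal is to record the citation chain \cite{Brumerstark}, \cite{BrumerstarkoverZ} $\Rightarrow$ \cite{etnc}, \cite{etnconZ} $\Rightarrow$ \cite{MR2336038} $\Rightarrow$ (\ref{igsce}), rather than to give an independent proof.
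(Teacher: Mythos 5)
Your proposal matches the paper's route exactly: the paper does not prove Conjecture~\ref{intgs} directly but establishes it in Theorem~\ref{intgsoverZ}, whose proof is precisely the two-step citation chain you give --- Burns's reduction \cite[Corollary 4.3]{MR2336038} from $\text{eTNC}^-$ to the integral Gross--Stark conjecture, followed by the unconditional proof of $\text{eTNC}^-$. The only cosmetic difference is that the paper cites \cite[Theorem 1]{etnconZ} for $\text{eTNC}^-$ over $\mathbb{Z}$ in one stroke, whereas you split into an odd part (via \cite[Theorem B]{etnc}) and a $2$-part (via \cite{etnconZ}); since \cite{etnconZ} already covers all primes, this split is redundant but harmless, and the paper's introduction acknowledges both references anyway.
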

To see how this conjecture can provide more information about the Brumer--Stark unit, we consider the $\sigma$ component of (\ref{igsce}). We then see that Conjecture \ref{intgs} implies
\begin{equation}\label{intgsandrec}
    \rec_\mathfrak{p}(u_\mathfrak{p}(\sigma)) = \prod_{\substack{ \tau \in \mathcal{G} \\ \tau \mid_H=\sigma^{-1} }} \tau^{\zeta_{S,T}(L/F, \tau^{-1},0) }.
\end{equation}

Taking the inverse of $\rec_\mathfrak{p}$ on both sides of the above equation allows us to gain more information about the unit $u_\mathfrak{p}$. In particular, it gives us the value of $u_\mathfrak{p}(\sigma) \in F_\mathfrak{p}^\ast/ \ker(\rec_\mathfrak{p})$. We follow the ideas presented in \cite[\S 3.1]{MR2420508}. Let $\mathfrak{f}$ denote the conductor of $H/F$. To gain some more precise information, one can apply (\ref{intgsandrec}) with $L=K$ for every $H \subset K \subset H_{\mathfrak{fp}^\infty}$. Here, we define $H_{\mathfrak{fp}^\infty}$ to be the union of the narrow ray class fields, $H_{\mathfrak{fp}^m}$, of $F$ of conductor $\mathfrak{fp}^m$, for each $m \in \mathbb{Z}_{\geq 1}$. The local reciprocity map at $\mathfrak{p}$ induces an isomorphism
\[ \rec_\mathfrak{p} : F_\mathfrak{p}^\ast / \widehat{E_+(\mathfrak{f})_\mathfrak{p}} \cong \Gal(H_{\mathfrak{fp}^\infty} / H) , \]
where we write $E_+(\mathfrak{f})_\mathfrak{p}$ for the totally positive $\mathfrak{p}$-units of $F$ which are congruent to $1 \pmod{\mathfrak{f}}$. Then, $\widehat{E_+(\mathfrak{f})_\mathfrak{p}}$ denotes the closure of $E_+(\mathfrak{f})_\mathfrak{p}$ in $F_\mathfrak{p}^\ast$. Thus, we can use (\ref{intgsandrec}) to give the value of $u_\mathfrak{p}(\sigma)$ in $ F_\mathfrak{p}^\ast / \widehat{E_+(\mathfrak{f})_\mathfrak{p}}$. In \cite{MR2420508}, Dasgupta develops the methods of horizontal Iwasawa theory to further refine this kernel and shows that the $p$-part of Conjecture \ref{intgs} implies Theorem \ref{thmforform}. It is these horizontal methods that we use in \S5 to show that Theorem \ref{mainthmforpaper} follows from the combination of the $l$-part of Conjecture \ref{intgs} for every rational prime $l$.

The $p$-part of Conjecture \ref{intgs}, when $p$ is odd, has recently been proved by Dasgupta--Kakde in \cite{intgrossstark}. We give the statement of their theorem below.

\begin{theorem}[Theorem 1.4, \cite{intgrossstark}]\label{dkintgross}
Let $p$ be an odd prime and suppose that $\mathfrak{p}$ lies above $p$. The integral Gross--Stark conjecture (Conjecture \ref{intgs}) holds in $(I/I^2)\otimes \mathbb{Z}_p$.
\end{theorem}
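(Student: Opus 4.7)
The plan is to follow the Ribet-style deformation theory approach via Hida families of Hilbert modular forms, working entirely on the $p$-adic analytic side and transferring information to the Galois side via a suitably constructed Galois representation. The strategy is to realize both the Stickelberger-valued side and the reciprocity side of (\ref{igsce}) as coming from specializations of a single $p$-adic family of modular forms, so that a congruence of $q$-expansions forces a congruence of associated arithmetic invariants.

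First I would reformulate Conjecture \ref{intgs} in a way suited to $p$-adic analysis. Applying the $\sigma$-component as in (\ref{intgsandrec}) and letting $L$ range through a suitable tower inside $H_{\mathfrak{f}\mathfrak{p}^\infty}$, the statement becomes a precise identity in $F_\mathfrak{p}^\ast / \widehat{E_+(\mathfrak{f})_\mathfrak{p}}$ comparing $\rec_\mathfrak{p}(u_\mathfrak{p}(\sigma))$ to a product of Frobenius elements weighted by Deligne--Ribet $p$-adic $L$-values. Equivalently, one should view both sides as elements of a relative augmentation quotient $I/I^2 \otimes \mathbb{Z}_p$ and verify they agree character by character after inverting enough to reduce to the $p$-part.

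The core construction I would carry out is a group-ring-valued Hilbert modular cusp form $\mathbf{f}$ obtained by deforming a Hilbert Eisenstein series whose constant terms at the cusps encode the Stickelberger element $\Theta_{S,T}^{L/F}$. The key input is a congruence modulo the ideal generated by $\Theta_{S,T}^{L/F}$ between a carefully chosen Eisenstein series and a cuspidal Hida family, in the style of Ribet, Wiles, and the Mazur--Wiles construction, but formulated over the group ring $\mathbb{Z}_p[\mathcal{G}]$ and over the totally real field $F$. To $\mathbf{f}$ one associates, via Hida theory for $\text{GL}_2/F$ (and its group-ring-valued refinement), a Galois representation
\[
\rho : G_F \longrightarrow \text{GL}_2(\Lambda),
\]
where $\Lambda$ is a localization of a group-ring Hida--Hecke algebra, which is residually reducible of the expected Eisenstein shape. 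Ordinarity at $\mathfrak{p}$ provides an unramified quotient whose Frobenius eigenvalue is a $p$-adic unit $u$ interpolating the Brumer--Stark data.

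The technical heart, and the main obstacle I expect, is controlling the reducibility ideal of $\rho$ and extracting from the off-diagonal extension class a cocycle whose value on a Frobenius at $\mathfrak{p}$ recovers $\rec_\mathfrak{p}(u_\mathfrak{p}(\sigma))$ modulo $I^2$. Concretely, one must (i) show that the reducibility ideal equals the ideal cut out by $\Theta_{S,T}^{L/F}$, for which a lower bound comes from the Iwasawa main conjecture for totally real fields (Wiles) and an upper bound from the congruence producing $\mathbf{f}$; (ii) show that the resulting extension of characters is crystalline or finite-flat away from $\mathfrak{p}$ with prescribed behavior at $\mathfrak{p}$, so that class field theory lets us interpret the extension as an element of $U_\mathfrak{p} \otimes \mathbb{Z}_p$; and (iii) match that element, via local class field theory, with the quantity $\rec_G(u_\mathfrak{p})$ appearing in Conjecture \ref{intgs}. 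The hypothesis that $p$ is odd is used throughout this step to avoid pathologies in the residual representation and to ensure that the multiplicity-one and $\Lambda$-freeness statements needed for Hida theory hold.

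Finally I would combine these pieces: the group-ring identity between the reducibility ideal and $\Theta_{S,T}^{L/F}$ together with the identification of the off-diagonal entry with $\rec_G(u_\mathfrak{p})$ yields the congruence (\ref{igsce}) in $(I/I^2)\otimes \mathbb{Z}_p$, which is exactly the assertion of Theorem \ref{dkintgross}. The hardest obstacle is the construction of the cuspidal deformation with the correct constant terms and the verification that the reducibility ideal is not strictly larger than expected; everything else is a matter of carefully unwinding Hida theory and local reciprocity.
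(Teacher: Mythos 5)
The paper does not actually prove Theorem \ref{dkintgross}; it imports it verbatim from \cite{intgrossstark}, and there is nothing in this paper to compare your argument against. What the paper \emph{does} prove is the strictly stronger Theorem \ref{intgsoverZ} (Gross's conjecture over $\mathbb{Z}$, not just over $\mathbb{Z}_p$ for odd $p$), and the route there is completely different from yours: it invokes Burns's reduction of Gross's conjecture to $\text{eTNC}^-$ \cite[Corollary 4.3]{MR2336038}, and then cites the recent proof of $\text{eTNC}^-$ over $\mathbb{Z}$ by Dasgupta--Kakde--Silliman \cite{etnconZ}. That route is a two-line deduction in the paper; it buys a cleaner integral statement and sidesteps the direct modular-forms argument entirely.

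Your sketch is, at the level of slogans, a fair description of the strategy actually used in \cite{intgrossstark}: group-ring-valued Hilbert Eisenstein series, a Ribet-style congruence to a cuspidal Hida family over $\mathbb{Z}_p[\mathcal{G}]$, a residually reducible Galois representation, and an analysis of the reducibility/Eisenstein ideal together with local conditions at $\mathfrak{p}$. But as a proof it is far too schematic to assess, and the places you flag as ``the technical heart'' are exactly where the known proof requires substantial new input that you do not supply. In particular: the argument in the literature does not go directly from an extension class to the congruence (\ref{igsce}); rather it passes through a strong Fitting-ideal statement for a Ritter--Weiss/Selmer-type module and then deduces Gross's conjecture from that by a separate commutative-algebra step. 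Your item (i), invoking Wiles's Iwasawa main conjecture for the ``lower bound,'' is not what is done and would not suffice at the required integral level over $\mathbb{Z}_p[\mathcal{G}]$; the Fitting ideal computation is the point of the construction, not an imported black box. And the delicate issue specific to Gross's conjecture --- the trivial zero of the $p$-adic $L$-function at $\mathfrak{p}$, which is why the statement lives in $I/I^2$ and why one must work with the tower $L/H/F$ rather than a single extension --- is not addressed at all. So: correct genre, plausible outline, but the essential lemmas are named rather than proved, and the paper under review in any case does not attempt this route, preferring to quote the result and to derive the integral version from $\text{eTNC}^-$.
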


In this paper we require the following theorem.

\begin{theorem}\label{intgsoverZ}
The integral Gross--Stark conjecture holds over $\mathbb{Z}$.
\end{theorem}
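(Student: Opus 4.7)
The plan is to deduce Theorem \ref{intgsoverZ} directly from two results already stated in the introduction. First, Dasgupta--Kakde--Silliman in \cite{etnconZ} prove the minus part of the equivariant Tamagawa number conjecture over $\mathbb{Z}$ for finite CM extensions of totally real fields (the statement denoted eTNC$^-$ in \S1). Second, Burns in \cite[Corollary 4.3]{MR2336038} shows that eTNC$^-$ for such extensions implies the integral Gross--Stark conjecture (Conjecture \ref{intgs}). Chaining these two gives the integral Gross--Stark conjecture over $\mathbb{Z}$.

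Concretely, I would first note that since $I/I^2$ is a finitely generated abelian group, it suffices to verify the congruence (\ref{igsce}) after tensoring with $\mathbb{Z}_l$ for every rational prime $l$. For odd $l$, Theorem \ref{dkintgross} already supplies the $l$-part of the conjecture in $(I/I^2)\otimes \mathbb{Z}_l$, so nothing new is required there. The content of the theorem is thus really the $2$-part; this is precisely where Dasgupta--Kakde--Silliman's input is needed, since \cite{etnconZ} establishes eTNC$^-$ $l$-adically for all $l$, including $l=2$, whereas Theorem \ref{dkintgross} is silent at $p=2$. Burns' implication from eTNC$^-$ to the integral Gross--Stark conjecture is $l$-adic in nature, so one simply applies it after localising at $2$ to close out that remaining prime.

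The hard work has been carried out elsewhere: the genuinely difficult step is the proof of eTNC$^-$ at $2$ in \cite{etnconZ}, which builds on the proof of the Brumer--Stark conjecture over $\mathbb{Z}$ in \cite{BrumerstarkoverZ} and adapts the strategy of Bullach--Burns--Daoud--Seo from \cite{etnc}. Once that input is granted, the deduction here is a packaging argument, and I would present it briefly, pointing to the precise formulation of eTNC$^-$ used in \cite{etnconZ} and checking that the hypotheses of Burns' corollary are satisfied by our setup, namely that $H/F$ is a finite abelian CM extension, that $R$ contains the archimedean places and the primes ramified in $H$, that $\mathfrak{p}$ splits completely in $H$, and that our choice of $T$ (ensuring no nontrivial roots of unity in $H$ are congruent to $1$ modulo $T$) is compatible with the integrality statement so that $\Theta_{S,T}^{L/F}\in\mathbb{Z}[\mathcal{G}]$. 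With these checks in place the argument is complete.
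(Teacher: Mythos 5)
Your proposal is correct and follows essentially the same route as the paper: cite Burns \cite[Corollary 4.3]{MR2336038} for the implication eTNC$^-$ $\Rightarrow$ integral Gross--Stark, then invoke \cite[Theorem 1]{etnconZ} for eTNC$^-$ over $\mathbb{Z}$. The additional remarks about the $l$-adic decomposition and the fact that only the $2$-part is genuinely new beyond Theorem \ref{dkintgross} are accurate context but not needed for the deduction the paper actually makes.
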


\begin{proof}
Burns has proved in \cite[Corollary 4.3]{MR2336038} that Conjecture \ref{intgs} is implied by $\text{eTNC}^-$, \cite[Theorem 1]{etnconZ} therefore completes the result.
\end{proof}

\section{Equality of the formula up to a $2$-power root of unity}\label{s:mainproof}

In this section we reduce the root of unity ambiguity in Theorem \ref{thmforform} to a $2$-power root of unity. We prove the following theorem.

\begin{theorem}\label{t:reduce to 2-power root of unity}
    Suppose that (\ref{assumptionforDK}) holds. Then, Conjecture \ref{bsformconj} holds. I.e.,
    \[ u_2 = u_\mathfrak{p} \quad \text{in} \quad (F_\mathfrak{p}^\ast/ \mu^2(F_\mathfrak{p}^\ast)) \otimes \mathbb{Z}[G] . \]
    Here, we write $\mu^2(F_\mathfrak{p}^\ast)$ for the group of 2-power roots of unity of $F_\mathfrak{p}^\ast$.
\end{theorem}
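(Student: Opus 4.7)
The plan is to establish, for each odd rational prime $l$, the equality
\[ u_2(\sigma) = u_\mathfrak{p}(\sigma) \quad \text{in} \quad F_\mathfrak{p}^\ast/D_l, \]
where $D_l \subset F_\mathfrak{p}^\ast$ is a chosen subgroup containing no roots of unity of order divisible by $l$. Theorem \ref{thmforform}, combined with the equality of formulas from \cite{equivformulas} and the argument of \S3 (which removes assumption (\ref{assumptiononcc})), already tells us that $u_2 = u_\mathfrak{p}$ up to a root of unity under (\ref{assumptionforDK}). Writing $u_2(\sigma) = \zeta(\sigma) u_\mathfrak{p}(\sigma)$ for some $\zeta(\sigma) \in \mu(F_\mathfrak{p}^\ast)$, the congruence modulo $D_l$ for every odd prime $l$ forces the order of $\zeta(\sigma)$ to be coprime to each such $l$. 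Intersecting over all odd $l$ then gives $\zeta(\sigma) \in \mu^2(F_\mathfrak{p}^\ast)$, which is the statement of the theorem.

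To prove the equality modulo $D_l$ for an odd prime $l$, the plan is to adapt Dasgupta's proof of \cite[Theorem 5.18]{MR2420508}, replacing $u_1$ throughout by $u_2$. The starting observation is a comparison of reciprocity: part \textit{e)} of Proposition \ref{prop6.3} expresses $\rec_\mathfrak{p}(u_2(\sigma))$ in terms of the partial zeta values $\zeta_{S,T}(L/F,\tau^{-1},0)$, and Theorem \ref{intgsoverZ} (the integral Gross--Stark conjecture over $\mathbb{Z}$) yields, via equation (\ref{intgsandrec}), exactly the same expression for $\rec_\mathfrak{p}(u_\mathfrak{p}(\sigma))$. Letting $L/F$ range over the finite abelian extensions of $F$ in the tower $H_{\mathfrak{fp}^\infty}/H$ that are unramified outside $S$, and applying the isomorphism $\rec_\mathfrak{p}\colon F_\mathfrak{p}^\ast/\widehat{E_+(\mathfrak{f})_\mathfrak{p}} \cong \Gal(H_{\mathfrak{fp}^\infty}/H)$ from \S4, gives $\zeta(\sigma) \in \widehat{E_+(\mathfrak{f})_\mathfrak{p}}$ modulo the relevant subgroup. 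The $l$-part of Dasgupta's horizontal Iwasawa theory from \cite[\S 5]{MR2420508} then refines this indeterminacy down to $D_l$.

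The main obstacle, and the reason that the argument must be carried out with $u_2$ rather than $u_1$, is that the horizontal step repeatedly enlarges $S$ by an auxiliary prime $\mathfrak{r}$ whose Frobenius can be prescribed in an appropriate Galois group (including complex conjugation). For $u_1$ the relation between $u_1(S \cup \{\mathfrak{r}\})$ and $u_1(S)$ was established in \cite{MR2420508} only up to a root of unity, and only under assumption (\ref{assumptiononcc}); the root-of-unity errors introduced at each step would prevent us from obtaining a clean $l$-adic conclusion. For $u_2$, part \textit{c)} of Proposition \ref{prop6.3} gives an exact relation
\[ u_2(S \cup \{\mathfrak{r}\}, \sigma) = u_2(S,\sigma) \cdot u_2(S, \sigma_\mathfrak{r}^{-1}\sigma)^{-1}, \]
and when $\sigma_\mathfrak{r}$ is a complex conjugation Theorem \ref{t:cc} (proved in \S3 without any hypothesis on $S$) supplies the exact analogue of the relation invoked by Dasgupta. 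Because both relations are exact, no spurious $l$-power roots of unity accumulate through the horizontal limit, and the argument carries through for every odd $l$. Assembling the resulting congruences for all odd primes $l$ yields the theorem.
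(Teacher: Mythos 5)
Your proposal follows essentially the same route as the paper's proof: start from Theorem \ref{thmforform} (so $u_2(\sigma)/u_\mathfrak{p}(\sigma)$ is a root of unity under (\ref{assumptionforDK})); use Proposition \ref{prop6.3}\textit{e)} together with the unconditional Theorem \ref{intgsoverZ}, packaged as Proposition \ref{propGSequality}, to pin down $u_2(\sigma)$ modulo $D(\mathfrak{f},\mathfrak{g})$; choose, via Lemma \ref{lemmanew5.17}, auxiliary primes $\mathfrak{r}_1,\dots,\mathfrak{r}_s$ in the narrow ray class of a complex-conjugation prime so that $D(\mathfrak{f},\mathfrak{r}_1\cdots\mathfrak{r}_s)$ meets $\mu_l(F_\mathfrak{p}^\ast)$ trivially; and then transfer back to the original $S$ using the exact relations supplied by \S3.

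You do, however, misattribute the reason for the restriction to odd $l$. You write that because the $u_2$ relations are exact ``no spurious $l$-power roots of unity accumulate through the horizontal limit, and the argument carries through for every odd $l$.'' Exactness of the relations (\ref{eqn67new2}) and (\ref{eqn68new2}) in fact rules out spurious roots of unity for \emph{every} prime, including $l=2$; that is why one must work with $u_2$ rather than $u_1$, but it is not why $l$ must be odd. The oddness constraint appears only at the descent step: one obtains $u_2(S',\sigma) = u_\mathfrak{p}(S',\sigma)$ modulo roots of unity of order coprime to $l$, and (\ref{eqn67new2})--(\ref{eqn68new2}) convert this into the statement that $\bigl(u_2(S,\sigma)/u_\mathfrak{p}(S,\sigma)\bigr)^{2^s}$ has order coprime to $l$. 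Since the ratio is already a root of unity, one may cancel the exponent $2^s$ on its $l$-primary part --- but only because $l$ is odd, so that $2^s$ is invertible modulo $l^m$. This is exactly where the residual $2$-power ambiguity of the theorem's statement originates, and it is also why \S6 must use a different argument (working with ratios $u_2(\sigma)u_2(\sigma\tau)^{-1}$) to handle the prime $2$. Also note, for precision, that what one actually proves for the original $S$ is equality modulo $\mu(F_\mathfrak{p}^\ast)/\mu^l(F_\mathfrak{p}^\ast)$ rather than modulo a fixed $D_l$; the $D$-group only controls the enlarged set $S'$. With these points made explicit, your proposal matches the paper's argument.
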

 
We show that the above theorem is implied by Theorem \ref{intgsoverZ}. To further gain more information from (\ref{intgsandrec}), we work with all possible extensions $L/H/F$. For more details on this approach, we refer readers to \cite{MR2420508}. We present here the statements which are required for our work.

Again, let $\mathfrak{f}$ be the conductor of the extension $H/F$ and write $E_+(\mathfrak{f})$ for the totally positive units of $F$ which are congruent to 1 modulo $\mathfrak{f}$. Let $\mathfrak{g}$ denote the product of the finite primes in $S$ that do not divide $\mathfrak{f} \mathfrak{p}$. Then, we define $H_S \coloneqq H_{(\mathfrak{f} \mathfrak{p} \mathfrak{g})^\infty}$. Here, $H_{(\mathfrak{f} \mathfrak{p} \mathfrak{g})^\infty}$ is the union of the narrow ray class fields $H_{\mathfrak{f}^a\mathfrak{p}^b\mathfrak{g}^c}$ for all positive integers $a,b,c$. For $v \mid \mathfrak{f} \mathfrak{g}$, let $U_{v, \mathfrak{f}}$ denote the group of elements of $\mathcal{O}_v ^\ast$ which are congruent to $1$ modulo $\mathfrak{f}\mathcal{O}_v ^\ast$; in particular, $U_{v,\mathfrak{f}}=\mathcal{O}_v^\ast$ for $v \mid \mathfrak{g}$. Let $\mathcal{U}_{\mathfrak{fg}}= \prod_{v \mid \mathfrak{f} \mathfrak{g}} U_{v, \mathfrak{f}}$.

\begin{proposition}[Proposition 3.4, \cite{MR2420508}]\label{prop2}
Theorem \ref{intgsoverZ} is equivalent to the existence of an element $u_T \in U_\mathfrak{p}$ with $u_T \equiv 1 \pmod{T}$ and 
\begin{equation*}
    ( \sigma_\mathfrak{b}( u_T),1)= \pi^{\zeta_{R,T}(H_\mathfrak{f}/F, \mathfrak{b},0)} \multint_{\mathbb{O}\times \mathcal{U}_{\mathfrak{fg}}/ \overline{E_+(\mathfrak{f})}} x \ d \mu (\mathfrak{b}, x)
\end{equation*}
in $(F_\mathfrak{p}^\ast \times \mathcal{U}_{\mathfrak{fg}}) / \overline{E_+(\mathfrak{f})}$ for all fractional ideals $\mathfrak{b}$ relatively prime to $S$.
\end{proposition}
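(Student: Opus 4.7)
The plan is to interpret the integral formula as the inverse-limit version of the integral Gross--Stark identity (\ref{intgsandrec}). The right-hand quotient $(F_\mathfrak{p}^*\times \mathcal{U}_{\mathfrak{fg}})/\overline{E_+(\mathfrak{f})}$ is, by global class field theory, canonically isomorphic via $\rec_\mathfrak{p}$ to $\Gal(H_S/H)$; and the Shintani--Eisenstein measure $\mu = \mu(\mathfrak{b},x)$ is constructed so that, when projected to any finite layer $H\subset L\subset H_S$, its multiplicative integral reproduces the Stickelberger-style product $\prod_{\tau\mid_H = \sigma_\mathfrak{b}^{-1}}\tau^{\zeta_{S,T}(L/F,\tau^{-1},0)}$. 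Under this dictionary the proposition becomes a compatibility statement between inverse-limit data and per-level Gross--Stark identities, rather than a concrete computation.

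First I would identify the target group. Since $H_S=\bigcup_{a,b,c}H_{\mathfrak{f}^a\mathfrak{p}^b\mathfrak{g}^c}$, idelic reciprocity together with strong approximation yields a canonical isomorphism
\[ (F_\mathfrak{p}^*\times \mathcal{U}_{\mathfrak{fg}})/\overline{E_+(\mathfrak{f})}\;\xrightarrow{\ \rec\ }\;\Gal(H_S/H), \]
so an equality in this profinite quotient is equivalent to the collection of its projections to the finite layers $\Gal(L/H)$. The factor $\pi^{\zeta_{R,T}(H_\mathfrak{f}/F,\mathfrak{b},0)}$ on the right accounts for the non-unit $\mathfrak{p}$-part of $\sigma_\mathfrak{b}(u_T)$, which is fixed by the $\mathfrak{P}$-order statement of Conjecture \ref{Conj2.5}, so it suffices to match the unit parts, where reciprocity acts.

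Next I would show that, projected to any finite $L\subset H_S$, both sides of the proposed formula encode the $L/H/F$ instance of Conjecture \ref{intgs}. On the left, projection yields $\rec_\mathfrak{p}(\sigma_\mathfrak{b}(u_T))\in \Gal(L/H)$. On the right, the interpolation property of $\mu$ converts the multiplicative integral into $\prod_{\tau\mid_H=\sigma_\mathfrak{b}^{-1}}\tau^{\zeta_{S,T}(L/F,\tau^{-1},0)}$. Their equality at every finite $L\subset H_S$ is exactly (\ref{intgsandrec}) for that $L$, which in turn is equivalent to the tower formulation of Conjecture \ref{intgs}. Assembling the two directions, Theorem \ref{intgsoverZ} gives the equality at every finite level and hence, via the inverse-limit identification above, the full formula; conversely, projecting the formula recovers the per-level reciprocity identity and therefore the conjecture over $\mathbb{Z}$.

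The main obstacle is not this equivalence itself but the construction and interpolation property of the measure $\mu$. One needs the $T$-smoothed zeta values to be $\mathbb{Z}$-valued (Deligne--Ribet, Cassou-Nogu\'es) so that the Eisenstein-style assembly of $\mu$ on $\mathbb{O}\times \mathcal{U}_{\mathfrak{fg}}/\overline{E_+(\mathfrak{f})}$ is well defined, and one must verify that its integration against any continuous character of a finite quotient of $\Gal(H_S/H)$ produces the corresponding partial zeta value at $s=0$. This is the content of the horizontal Iwasawa construction of \cite[\S 3]{MR2420508}; once that measure is in hand with the claimed interpolation, the proposition drops out from inverse-limit compatibility of reciprocity maps.
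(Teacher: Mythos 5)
This proposition is cited verbatim from Dasgupta's paper \cite{MR2420508}; the present paper supplies no proof of it, so there is no in-paper argument to compare against. Your sketch aims to reconstruct Dasgupta's argument, and its broad strategy (reciprocity plus inverse-limit compatibility plus an interpolation property of the Shintani--Eisenstein measure) is in the right spirit. However, there is a concrete error at the core of the reduction, and the technically hard step is deferred rather than argued.

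The error is the claimed isomorphism
$(F_\mathfrak{p}^\ast\times\mathcal{U}_{\mathfrak{fg}})/\overline{E_+(\mathfrak{f})}\xrightarrow{\ \rec\ }\Gal(H_S/H)$.
What class field theory actually gives, as the paper itself records, is
$\rec_S\colon (F_\mathfrak{p}^\ast\times\mathcal{U}_{\mathfrak{fg}})/\overline{E_+(\mathfrak{f})_\mathfrak{p}}\cong\Gal(H_S/H)$,
where $E_+(\mathfrak{f})_\mathfrak{p}$ is the group of totally positive $\mathfrak{p}$-units $\equiv 1\pmod{\mathfrak{f}}$. Since $E_+(\mathfrak{f})_\mathfrak{p}/E_+(\mathfrak{f})$ has rank one (accounting for the $\mathfrak{p}$-valuation), the group in the proposition is strictly larger than $\Gal(H_S/H)$ and is not profinite; in particular the statement ``an equality in this profinite quotient is equivalent to the collection of its projections to finite layers'' is not available to you. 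This is not a cosmetic issue: it is exactly why the proposition separates the valuation part (the $\pi^{\zeta_{R,T}(H_\mathfrak{f}/F,\mathfrak{b},0)}$ factor, keyed to the Brumer--Stark $\mathfrak{P}$-order) from the unit part (detected by $\rec_S$ after passing to the $\mathfrak{p}$-unit quotient). You gesture at this split but then do the reduction as if the whole group were $\Gal(H_S/H)$, which collapses the two and loses the valuation comparison as an independent ingredient.

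Beyond that, the step you acknowledge as ``the main obstacle'' --- that $\multint_{\mathbb{O}\times\mathcal{U}_{\mathfrak{fg}}/\overline{E_+(\mathfrak{f})}} x\,d\mu(\mathfrak{b},x)$, after projection to any finite layer $L\subset H_S$, reproduces $\prod_{\tau\mid_H=\sigma_\mathfrak{b}^{-1}}\tau^{\zeta_{S,T}(L/F,\tau^{-1},0)}$ --- is the actual content of the proposition and is not proved here; nor is the equivalence between the reciprocity identity (\ref{intgsandrec}) ranging over all $L$ and the $I/I^2$-congruence formulation of Conjecture \ref{intgs}. Deferring these to \cite{MR2420508} is reasonable given that the paper itself only cites the result, but it means your text is a road map to Dasgupta's proof rather than a proof, and the map as drawn contains the wrong target for $\rec_S$.
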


Here, we have denoted by $\overline{E_+(\mathfrak{f})}$ the closure of $E_+(\mathfrak{f})$ diagonally embedded in $F_\mathfrak{p}^\ast \times \mathcal{U}_{\mathfrak{fg}}$. Define 
\begin{equation*}
    D(\mathfrak{f}, \mathfrak{g})= \{ x \in F_\mathfrak{p}^\ast : (x,1) \in \overline{E_+(\mathfrak{f})} \subset F_\mathfrak{p}^\ast \times \mathcal{U}_{\mathfrak{fg}} \}.
\end{equation*}
Dasgupta notes in \cite{MR2420508} that Proposition \ref{prop2} may be interpreted as stating that Conjecture \ref{intgs} is equivalent to a formula for the image of $u_T$ in $F_\mathfrak{p}^\ast / D(\mathfrak{f}, \mathfrak{g})$. The reciprocity map of class field theory induces an isomorphism
\[ \text{rec}_S : (F_\mathfrak{p}^\ast \times \mathcal{U}_{\mathfrak{fg}}) / \overline{E_+(\mathfrak{f})_\mathfrak{p}} \cong \text{Gal}(H_S/H) . \]
Here, we have defined $E_+(\mathfrak{f})_\mathfrak{p}$ as the group of totally positive $\mathfrak{p}$-units congruent to 1 modulo $\mathfrak{f}$.

\begin{proposition}\label{propGSequality}
Let $\sigma \in G$. The construction, $u_2(\sigma)$, is equal to the Brumer--Stark unit in $F_\mathfrak{p}^\ast / D(\mathfrak{f}, \mathfrak{g})$. i.e.,
\[  u_2( \sigma) \equiv u_\mathfrak{p}(\sigma) \pmod{D(\mathfrak{f}, \mathfrak{g})}. \]
\end{proposition}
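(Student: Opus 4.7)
The plan is to compare the images of $u_2(\sigma)$ and $u_\mathfrak{p}(\sigma)$ under the global reciprocity map at $\mathfrak{p}$, landing in $\Gal(H_S/H)$, and then invoke the isomorphism $\rec_S$ recalled just above the statement to deduce equality modulo $D(\mathfrak{f},\mathfrak{g})$. The two key inputs are Proposition \ref{prop6.3}(e), which gives an exact formula for $\rec_\mathfrak{p}(u_2(\sigma))$ in any finite abelian extension $L/F$ containing $H$ and unramified outside $S$, together with Theorem \ref{intgsoverZ} (the integral Gross--Stark conjecture over $\mathbb{Z}$), which via equation (\ref{intgsandrec}) supplies the matching formula for $\rec_\mathfrak{p}(u_\mathfrak{p}(\sigma))$.

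Concretely, I would take $L = H_{\mathfrak{f}^a \mathfrak{p}^b \mathfrak{g}^c}$ for arbitrary positive integers $a,b,c$, each of which is a finite abelian extension of $F$ contained in $H_S$ and unramified outside $S$. Proposition \ref{prop6.3}(e) then gives
\[ \rec_\mathfrak{p}(u_2(\sigma)) = \prod_{\tau \mid_H = \sigma^{-1}} \tau^{\zeta_{S,T}(L/F,\tau^{-1},0)} \quad \text{in } \Gal(L/H), \]
and the $\sigma$-component of (\ref{igsce}), combined with Theorem \ref{intgsoverZ}, yields the same expression with $u_\mathfrak{p}(\sigma)$ replacing $u_2(\sigma)$. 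Passing to the inverse limit over all such $L$ shows that $\rec_\mathfrak{p}(u_2(\sigma)) = \rec_\mathfrak{p}(u_\mathfrak{p}(\sigma))$ in $\Gal(H_S/H)$. Viewing $u_2(\sigma)$ and $u_\mathfrak{p}(\sigma)$ as elements of $F_\mathfrak{p}^\ast \times \mathcal{U}_{\mathfrak{fg}}$ with trivial second coordinate and applying the injectivity of $\rec_S$, this translates into the relation
\[ (u_2(\sigma) u_\mathfrak{p}(\sigma)^{-1}, 1) \in \overline{E_+(\mathfrak{f})_\mathfrak{p}}. \]

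To upgrade this from $\overline{E_+(\mathfrak{f})_\mathfrak{p}}$ to $\overline{E_+(\mathfrak{f})}$, and thereby land in $D(\mathfrak{f},\mathfrak{g})$, I would invoke Proposition \ref{prop6.3}(a) together with Conjecture \ref{Conj2.5} (now a theorem by \cite{BrumerstarkoverZ}) to observe that both $u_2(\sigma)$ and $u_\mathfrak{p}(\sigma)$ have $\mathfrak{p}$-valuation equal to $\zeta_{R,T}(\sigma,0)$, so that $y \coloneqq u_2(\sigma) u_\mathfrak{p}(\sigma)^{-1}$ is a $\mathfrak{p}$-adic unit. An approximating sequence $x_n \in E_+(\mathfrak{f})_\mathfrak{p}$ with $(x_n,x_n) \to (y,1)$ then satisfies $\ord_\mathfrak{p}(x_n) \to \ord_\mathfrak{p}(y) = 0$, and since valuations are integer-valued, $\ord_\mathfrak{p}(x_n) = 0$ for $n$ large, i.e., eventually $x_n \in E_+(\mathfrak{f})$. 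Hence $(y,1) \in \overline{E_+(\mathfrak{f})}$, giving $y \in D(\mathfrak{f},\mathfrak{g})$, as required.

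The most delicate step is the reciprocity comparison itself: one must check that the formula in Proposition \ref{prop6.3}(e) and equation (\ref{intgsandrec}) really have been normalised consistently so that they agree on the nose in every finite quotient $\Gal(L/H)$, not merely modulo some harmless correction. The subsequent passage from $\overline{E_+(\mathfrak{f})_\mathfrak{p}}$ to $\overline{E_+(\mathfrak{f})}$ is routine given the discreteness of $\ord_\mathfrak{p}$, and the inverse limit step over ray class fields of conductor $\mathfrak{f}^a \mathfrak{p}^b \mathfrak{g}^c$ is exactly the horizontal Iwasawa-theoretic setup used by Dasgupta in \cite[\S 3.1]{MR2420508}, here applied to $u_2$ rather than $u_1$.
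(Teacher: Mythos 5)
Your proof follows the same route as the paper: apply $\rec_S$ to both $(u_2(\sigma),1)$ and $(u_\mathfrak{p}(\sigma),1)$, use Proposition \ref{prop6.3}(e) to evaluate the first and equation (\ref{intgsandrec}) (which follows from Theorem \ref{intgsoverZ}) to evaluate the second, observe that they coincide in $\Gal(H_S/H)$, and conclude by the injectivity of $\rec_S$. The paper's proof is terser on two points you spell out. First, you work with each finite layer $L = H_{\mathfrak{f}^a\mathfrak{p}^b\mathfrak{g}^c}$ and pass to the inverse limit, whereas the paper writes the formula directly at the level of the profinite extension $H_S$; both are fine. Second, and more substantively, you notice that the isomorphism $\rec_S$ as recorded in the paper is stated modulo $\overline{E_+(\mathfrak{f})_\mathfrak{p}}$ (closure of the totally positive $\mathfrak{p}$-units), whereas $D(\mathfrak{f},\mathfrak{g})$ is defined via $\overline{E_+(\mathfrak{f})}$ (closure of the totally positive global units). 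Your extra step --- using Proposition \ref{prop6.3}(a) together with the Brumer--Stark relation (\ref{uteqn}) to see that $u_2(\sigma)u_\mathfrak{p}(\sigma)^{-1}$ has $\mathfrak{p}$-valuation zero, then using continuity of $\ord_\mathfrak{p}$ to replace an approximating sequence in $E_+(\mathfrak{f})_\mathfrak{p}$ by one eventually lying in $E_+(\mathfrak{f})$ --- is correct and closes a gap the paper's ``Thus, we have the result'' leaves implicit. This is a worthwhile addition rather than a different approach; the central idea and the inputs used are identical.
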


\begin{proof}
We consider the unit $u_2( \sigma)$ and apply $\text{rec}_S$ to $(u_2( \sigma), 1)$. Then by $\textit{e)}$ of Proposition \ref{prop6.3}, we have 
\[ \text{rec}_S((u_2( \sigma), 1)) = \prod_{\substack{\tau \in \text{Gal}(H_S/F), \\ \tau \mid_H = \sigma^{-1}  }} \tau^{\zeta_{S,T}(H_S/F, \tau^{-1},0)} = \text{rec}_S((u_\mathfrak{p}(\sigma) ,1)), \]
where the second equality follows from (\ref{intgsandrec}) which, as we noted in \S4, follows from Conjecture \ref{intgs}. Thus, we have the result.
\end{proof}

We follow the ideas presented by Dasgupta in \cite[Lemma 5.17]{MR2420508} to control the group $D(\mathfrak{f}, \mathfrak{g})$. The lemma below is simpler than \cite[Lemma 5.17]{MR2420508} since we are only required to find conditions such that there are no roots of unity of given orders in $D(\mathfrak{f}, \mathfrak{g})$. However, in \cite[Lemma 5.17]{MR2420508} Dasgupta finds conditions, for each $m \in \mathbb{Z}_{\geq 0}$, such that $D(\mathfrak{f}, \mathfrak{g}) \subseteq \mu(F_\mathfrak{p}^\ast) \pmod{\mathfrak{p}^m}$.

Let $\mathfrak{q}$ be a prime of $F$ that is unramified in $H$ and whose associated Frobenius, $\sigma_\mathfrak{q}$, is a complex conjugation in $H$. For any rational prime $l$ we write $\mu_l(F_\mathfrak{p}^\ast) \subseteq \mu(F_\mathfrak{p}^\ast)$ for the set of roots of unity in $F_\mathfrak{p}^\ast$ with order divisible by $l$. 

\begin{lemma}\label{lemmanew5.17}
Let $l$ be an odd rational prime. There exists a finite set of prime ideals, $\{ \mathfrak{r}_1, \dots , \mathfrak{r}_s \}$, in the narrow ray class of $\mathfrak{q}$ modulo $\mathfrak{f}$ such that the group, $D(\mathfrak{f}, \mathfrak{r}_1 \dots \mathfrak{r}_s )$, does not contain any element of $\mu_{l}(F_\mathfrak{p}^\ast)$.
\end{lemma}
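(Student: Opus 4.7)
My approach adapts Dasgupta's strategy in \cite[Lemma 5.17]{MR2420508}, simplified by the weaker conclusion sought. Since $D(\mathfrak{f},\mathfrak{r}_1\cdots\mathfrak{r}_s)$ is a subgroup of $F_\mathfrak{p}^\ast$ and any element of $\mu_l(F_\mathfrak{p}^\ast)$ has a power that is a primitive $l$-th root of unity in $F_\mathfrak{p}^\ast$, it suffices to ensure $D$ contains no primitive $l$-th root of unity. If $F_\mathfrak{p}$ contains none, the claim is vacuous; otherwise fix such a $\zeta \in F_\mathfrak{p}^\ast$ and let $N \geq 1$ be the largest integer with $\mu_{l^N}\subset F_\mathfrak{p}^\ast$. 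Maximality of $N$ guarantees $\zeta \notin (F_\mathfrak{p}^\ast)^{l^N}$.

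The main auxiliary object is the finite abelian extension $M := F_\mathfrak{f}(\mu_{l^N}, E_+(\mathfrak{f})^{1/l^N})$ of $F$, which is indeed finite because $E_+(\mathfrak{f})/l^N$ is finite. For each prime $\mathfrak{r}$ of $F$ unramified in $M$ and coprime to $l$, Kummer theory attaches to $\mathfrak{r}$ a character $a_\mathfrak{r}\colon E_+(\mathfrak{f})/l^N \to \mu_{l^N}$ encoding the action of a Frobenius above $\mathfrak{r}$ on $l^N$-th roots of the units. The key point is that if $\epsilon \in E_+(\mathfrak{f})$ is congruent to $1$ modulo a sufficiently high power of $\mathfrak{r}$, then $\epsilon \in (F_\mathfrak{r}^\ast)^{l^N}$, because $1+\mathfrak{r}$ is uniquely $l^N$-divisible when $\mathfrak{r}\nmid l$; consequently $a_\mathfrak{r}(\epsilon)=0$ for such $\epsilon$.

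The plan is to apply Chebotarev density to $M/F$ to choose primes $\mathfrak{r}_1,\dots,\mathfrak{r}_s$ satisfying (i) $\sigma_{\mathfrak{r}_i}|_{F_\mathfrak{f}}=\sigma_\mathfrak{q}$, placing $\mathfrak{r}_i$ in the narrow ray class of $\mathfrak{q}$ modulo $\mathfrak{f}$; (ii) each $\mathfrak{r}_i$ coprime to $l$ and unramified in $M$; and (iii) the characters $\{a_{\mathfrak{r}_i}\}$ generate $\Hom(E_+(\mathfrak{f})/l^N,\mu_{l^N})$ as a $\mathbb{Z}/l^N$-module, equivalently $\bigcap_i \ker a_{\mathfrak{r}_i}=0$. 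Granting such a choice, suppose for contradiction that $\zeta \in D(\mathfrak{f},\mathfrak{r}_1\cdots\mathfrak{r}_s)$. Then for arbitrarily large $K$ there is $\epsilon\in E_+(\mathfrak{f})$ with $\epsilon\equiv\zeta\pmod{\mathfrak{p}^K}$, $\epsilon\equiv 1\pmod{\mathfrak{f}^K}$, and $\epsilon\equiv 1\pmod{\mathfrak{r}_i^K}$ for all $i$. For $K$ large, the congruences at the $\mathfrak{r}_i$ give $a_{\mathfrak{r}_i}(\epsilon)=0$ for every $i$, so (iii) forces $\epsilon\in E_+(\mathfrak{f})^{l^N}\subset (F_\mathfrak{p}^\ast)^{l^N}$. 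On the other hand, taking $K$ so that $1+\mathfrak{p}^K \subset (F_\mathfrak{p}^\ast)^{l^N}$ the congruence at $\mathfrak{p}$ places $\epsilon$ in the coset $\zeta \cdot (F_\mathfrak{p}^\ast)^{l^N}$, hence $\zeta \in (F_\mathfrak{p}^\ast)^{l^N}$, contradicting maximality of $N$.

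The principal obstacle is the Chebotarev step: verifying that conditions (i) and (iii) are jointly realisable, notwithstanding that $\sigma_\mathfrak{q}=c$ acts by inversion on $\mu_{l^N}$ for odd $l$ and so acts nontrivially on the intersection $F_\mathfrak{f}\cap F(\mu_{l^N},E_+(\mathfrak{f})^{1/l^N})$. The reassuring feature is that once the restriction to $F_\mathfrak{f}$ is pinned to $\sigma_\mathfrak{q}$, the remaining freedom sits inside $\Gal(M/F_\mathfrak{f})$, which surjects onto $\Hom(E_+(\mathfrak{f})/l^N,\mu_{l^N})$ via Kummer duality; this should suffice to arrange (iii) by selecting sufficiently many primes, with the number bounded by the $\mathbb{Z}$-rank of $E_+(\mathfrak{f})$. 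Dasgupta carries out a substantially finer Chebotarev argument of this flavour in \cite[Lemma 5.17]{MR2420508}, so the necessary techniques are already available in the literature.
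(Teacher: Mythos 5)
Your proposal takes a genuinely different, and substantially heavier, route than the paper. The paper's own proof is a short pointwise argument: since $\mu_l(F_\mathfrak{p}^\ast)$ is finite, for each $\varepsilon \in \mu_l(F_\mathfrak{p}^\ast)$ it simply picks one prime $\mathfrak{r}$ in the narrow ray class of $\mathfrak{q}$ modulo $\mathfrak{f}$ at which $\varepsilon \not\equiv 1$ (noting that all but finitely many primes in this class will do), and observes directly from the definitions of $D(\mathfrak{f},\mathfrak{r})$ and $\mathcal{U}_{\mathfrak{f}\mathfrak{r}}$ that membership of $\varepsilon$ in $D(\mathfrak{f},\mathfrak{r})$ would force $\varepsilon \equiv 1 \pmod{\mathfrak{r}}$, a contradiction; the set $\{\mathfrak{r}_1,\dots,\mathfrak{r}_s\}$ is then the union of these choices over all $\varepsilon$. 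No Kummer theory and no Chebotarev argument over the auxiliary extension $M$ appear. You instead reconstruct the full machinery of \cite[Lemma 5.17]{MR2420508} — the extension $M = F_\mathfrak{f}(\mu_{l^N}, E_+(\mathfrak{f})^{1/l^N})$, the Kummer characters $a_\mathfrak{r}$, and a Chebotarev selection forcing $\bigcap_i\ker a_{\mathfrak{r}_i}=0$ — which would establish the stronger statement that $D$ has controlled image in $F_\mathfrak{p}^\ast/(F_\mathfrak{p}^\ast)^{l^N}$, far more than is needed here. The paper explicitly flags this distinction, noting that its lemma is ``simpler than \cite[Lemma 5.17]{MR2420508}'' precisely because it only needs to exclude finitely many roots of unity rather than control $D$ modulo $\mathfrak{p}^m$.

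There is also a genuine gap in your proposal that you yourself flag: the ``principal obstacle'' of jointly realising conditions (i) and (iii) via Chebotarev is not resolved, only asserted to be plausible. This is not a minor bookkeeping point — the interaction between the constraint $\sigma_\mathfrak{r}|_{F_\mathfrak{f}} = \sigma_\mathfrak{q}$ (with $\sigma_\mathfrak{q}=c$ acting by inversion on $\mu_{l^N}$) and the requirement that the resulting Kummer characters span $\Hom(E_+(\mathfrak{f})/l^N,\mu_{l^N})$ is exactly the delicate part of Dasgupta's argument, and it requires careful analysis of $E_+(\mathfrak{f})\otimes\mathbb{Z}_l$ as a Galois module. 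Appealing to ``the necessary techniques are already available in the literature'' does not close the gap. Had you taken the paper's simpler pointwise approach, this delicate Chebotarev verification would have been unnecessary.
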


\begin{proof}
Let $\varepsilon \in \mu_l(F_\mathfrak{p}^\ast)$. Then there exists a prime, $\mathfrak{r}$, of $F$ such that $\mathfrak{r}$ is in the narrow ray class of $\mathfrak{q}$ modulo $\mathfrak{f}$ and such that $\varepsilon$ is not congruent to 1 modulo $\mathfrak{r}$. We note that all but finitely many of the infinite number of primes in the narrow ray class of $\mathfrak{q}$ modulo $\mathfrak{f}$ satisfy this property.

Suppose now that $\varepsilon \in D(\mathfrak{f}, \mathfrak{r})$. Then, by the definition of $D(\mathfrak{f},\mathfrak{r})$ and, in particular, the definition of $\mathcal{U}_\mathfrak{fr}$ we see that $\varepsilon \equiv 1 \pmod{\mathfrak{r}}$. This contradicts our choice of $\mathfrak{r}$. Letting the $\mathfrak{r}_i$ consist of such an ideal prime $\mathfrak{r}$ for each element $\varepsilon \in \mu_l(F_\mathfrak{p}^\ast)$ completes the proof.
\end{proof}




We are now able to prove Theorem \ref{t:reduce to 2-power root of unity}. We follow the ideas of the proof of \cite[Theorem 5.18]{MR2420508}.

\begin{proof}[Proof of Theorem \ref{t:reduce to 2-power root of unity}]
Let $\sigma \in G$. We begin by noting that the roots of unity in $F_\mathfrak{p}^\ast$ have orders that divide $p^a(\N\mathfrak{p} -1)$ for some $a \in \mathbb{Z}_{\geq 0}$. Let $l \mid p^a(\N\mathfrak{p} -1)$ be an odd prime. Let $m \in \mathbb{Z}_{\geq 1}$ be such that $l^m$ exactly divides $p^a(\N\mathfrak{p} -1)$. Let $\varepsilon_l$ be a primitive root of unity of order $l^m$ and write 
\[ \mu^l(F_\mathfrak{p}^\ast) \coloneqq \langle  \varepsilon_l \rangle \subseteq \mu(F_\mathfrak{p}^\ast), \]
for the subgroup of $\mu(F_\mathfrak{p}^\ast)$ generated by $\varepsilon_l$. This definition is independent of the choice of the primitive root of unity $\varepsilon_l$. We show that, for each $\sigma \in G$,
\begin{equation}\label{maineqntoshow}
    u_2(\sigma) = u_\mathfrak{p}(\sigma) \ \text{in} \ F_\mathfrak{p}^\ast / (\mu(F_\mathfrak{p}^\ast) /  \mu^l(F_\mathfrak{p}^\ast) ) .
\end{equation}
Repeating this for each such odd prime gives the result. Fix such a prime $l$. Let $\{ \mathfrak{r}_1, \dots , \mathfrak{r}_s \}$ be a finite set of prime ideals as in Lemma \ref{lemmanew5.17}, and let $\mathfrak{r}$ be one of the $\mathfrak{r}_i$. It follows from 
\begin{equation}\label{e:zetareln}
    \zeta_{R\cup\{\mathfrak{r}\}}(H/F, \sigma, s)= \zeta_R(H/F, \sigma, s)- \text{N}\mathfrak{r}^{-s} \zeta_R(H/F, \sigma \sigma_{\mathfrak{r}}^{-1}, s) ,
\end{equation}
that the Brumer--Stark units attached to $S$ and $S \cup \{\mathfrak{r} \}$ are related by
\[ u_\mathfrak{p}(S \cup \{ \mathfrak{r}\} ,\sigma)  = \frac{u_\mathfrak{p}(S, \sigma)}{u_\mathfrak{p}(S, \sigma \sigma_\mathfrak{r}^{-1})}=u_\mathfrak{p}(S,\sigma)^2, \]
where this last equation follows from the fact that complex conjugation acts as inversion on $U_\mathfrak{p}$. Thus, if we let $S^\prime \coloneqq S \cup \{ \mathfrak{r}_1, \dots , \mathfrak{r}_s \}$, then we inductively obtain
\begin{equation}\label{eqn67new2}
    u_\mathfrak{p}(S^\prime, \sigma) = u_\mathfrak{p}(S, \sigma)^{2^s}. 
\end{equation}
Applying Lemma \ref{l:cc} inductively, we also have 
\begin{equation}\label{eqn68new2}
    u_2(S^\prime ,  \sigma) = u_2(S ,  \sigma)^{2^s}.
\end{equation}  

It follows from Proposition \ref{propGSequality} that in $F_\mathfrak{p}^\ast / D(\mathfrak{f}, \mathfrak{r}_1 \dots  \mathfrak{r}_s)  $ we have $u_2(S^\prime , \sigma ) = u_\mathfrak{p}(S^\prime, \sigma)$. By choice of the $\mathfrak{r}_i$, we have that $\mu_{l}(F_\mathfrak{p}^\ast)$ is not contained in $D(\mathfrak{f}, \mathfrak{r}_1 \dots \mathfrak{r}_s)$. From Theorem \ref{thmforform} we know that $u_2(S^\prime , \sigma ) = u_\mathfrak{p}(S^\prime, \sigma)$ in $F_\mathfrak{p}^\ast / \mu(F_\mathfrak{p}^\ast)$. It therefore follows that
\[ u_2(S^\prime , \sigma ) = u_\mathfrak{p}(S^\prime, \sigma) \quad \text{in} \quad F_\mathfrak{p}^\ast / (\mu(F_\mathfrak{p}^\ast) / \mu^l(F_\mathfrak{p}^\ast) ) . \]
Recall that $l$ is an odd prime, it then follows from (\ref{eqn67new2}) and (\ref{eqn68new2}) that 
\[ u_2(S , \sigma ) = u_\mathfrak{p}(S, \sigma) \quad \text{in} \quad F_\mathfrak{p}^\ast / (\mu(F_\mathfrak{p}^\ast) / \mu^l(F_\mathfrak{p}^\ast) ) . \]
Thus (\ref{maineqntoshow}) holds. As we noted above, repeating this for each odd prime which divides $p^a(\N\mathfrak{p} -1)$ gives us the result.
\end{proof}

\section{Equality of the formula at $2$}

In this section we remove the remaining $2$-power root of unity ambiguity from Theorem \ref{t:reduce to 2-power root of unity}.

As in the previous section we follow an idea of Dasgupta in \cite{MR2420508} to add aditional primes into the set $S$. We require a modification of Lemma \ref{lemmanew5.17} however as we need to work with primes in any given narrow ray class.

\begin{lemma}\label{l:findprime}
    Let $\tau \in G$. There exists a finite prime $\mathfrak{r} \nin S \cup \overline{T}$ such that $\sigma_\mathfrak{r}=\tau^{-1}$ and the group $D(\mathfrak{f}, \mathfrak{r})$ does not contain any element of $\mu_2(F_\mathfrak{p}^\ast)$. Here, as before, $\mu_2(F_\mathfrak{p}^\ast)$ is the subset of $\mu(F_\mathfrak{p}^\ast)$ containing the roots of unity with order divisible by $2$.
\end{lemma}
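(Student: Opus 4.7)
The plan is to mimic the proof of Lemma \ref{lemmanew5.17}, with two modifications reflecting the new setting: the Frobenius condition is now $\sigma_\mathfrak{r} = \tau^{-1}$ for a general $\tau \in G$ (rather than a fixed complex conjugation), and we require a single prime $\mathfrak{r}$, so we must exclude every element of $\mu_2(F_\mathfrak{p}^\ast)$ from $D(\mathfrak{f}, \mathfrak{r})$ simultaneously. The crucial input enabling the single-prime formulation is that $\mu(F_\mathfrak{p}^\ast)$, and in particular $\mu_2(F_\mathfrak{p}^\ast)$, is finite, since $F_\mathfrak{p}$ is a finite extension of $\mathbb{Q}_p$; indeed, the orders of its elements divide $p^a(\N\mathfrak{p}-1)$ for some $a \geq 0$, as already noted in the proof of Theorem \ref{t:reduce to 2-power root of unity}.

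First I would show that only finitely many primes of $F$ need to be excluded. Each $\varepsilon \in \mu_2(F_\mathfrak{p}^\ast)$ is non-trivial (its order is divisible by $2$) and lies in some cyclotomic extension $F(\varepsilon) \subseteq F_\mathfrak{p}$. Since $\varepsilon - 1$ is a non-zero element of $F(\varepsilon)$, only finitely many primes $\mathfrak{R}$ of $F(\varepsilon)$ divide $\varepsilon - 1$, and hence only finitely many primes $\mathfrak{r}$ of $F$ can lie below such $\mathfrak{R}$. Taking the union over the finite set $\mu_2(F_\mathfrak{p}^\ast)$ yields a finite set $\Sigma$ of \emph{bad} primes of $F$. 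Observe that $\Sigma$ always contains the primes of $F$ above $2$, since $-1 \in \mu_2(F_\mathfrak{p}^\ast)$, but in particular $\Sigma$ is finite.

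Next I would invoke the Chebotarev density theorem for the abelian extension $H/F$ to produce infinitely many primes $\mathfrak{r}$ of $F$, unramified in $H$, with $\sigma_\mathfrak{r} = \tau^{-1}$. Choosing such an $\mathfrak{r}$ outside the finite set $S \cup \overline{T} \cup \Sigma$ gives a prime satisfying the hypotheses of the lemma. Following the proof of Lemma \ref{lemmanew5.17}, if some $\varepsilon \in \mu_2(F_\mathfrak{p}^\ast)$ were in $D(\mathfrak{f}, \mathfrak{r})$, the definition of $D(\mathfrak{f}, \mathfrak{r})$, together with the fact that $U_{\mathfrak{r}, \mathfrak{f}} = \mathcal{O}_\mathfrak{r}^\ast$ (since $\mathfrak{r} \nmid \mathfrak{f}$), would force $\varepsilon \equiv 1 \pmod{\mathfrak{r}}$, contradicting $\mathfrak{r} \notin \Sigma$. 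I do not anticipate any substantive obstacle: the finiteness of $\mu_2(F_\mathfrak{p}^\ast)$ reduces the problem to locating a single prime outside a finite excluded set while having a prescribed Frobenius, and Chebotarev density handles this easily.
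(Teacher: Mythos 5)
Your proof is correct and takes essentially the same approach as the paper: for each $\varepsilon \in \mu_2(F_\mathfrak{p}^\ast)$ only finitely many primes are excluded, the finiteness of $\mu_2(F_\mathfrak{p}^\ast)$ makes the total excluded set finite, and a density argument produces $\mathfrak{r}$ with $\sigma_\mathfrak{r}=\tau^{-1}$ avoiding it, after which the defining property of $D(\mathfrak{f},\mathfrak{r})$ gives the contradiction. The paper phrases the density step via primes in a fixed narrow ray class modulo $\mathfrak{f}$ rather than an explicit invocation of Chebotarev, and merely cites the ``finitely many bad primes'' fact from Lemma \ref{lemmanew5.17} where you spell out the justification via divisors of $\varepsilon-1$, but these are only cosmetic differences.
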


\begin{proof}
    Let $\varepsilon \in \mu_2(F_\mathfrak{p}^\ast)$. As noted in the proof of Lemma \ref{lemmanew5.17}, for any prime idea $\mathfrak{q}$, all but finitely many of the infinite number of primes, $\mathfrak{r}$, in the narrow ray class of $\mathfrak{q}$ modulo $\mathfrak{f}$ are such that $\varepsilon$ is not congruent to $1$ modulo $\mathfrak{r}$. Since there are only a finite number of elements in $\mu_2(F_\mathfrak{p}^\ast)$, it is possible for us to choose one prime $\mathfrak{r}$ such that for any $\varepsilon \in \mu_2(F_\mathfrak{p}^\ast)$, we have that $\varepsilon$ is not congruent to $1$ modulo $\mathfrak{r}$. The result then follows in the same way as for Lemma \ref{lemmanew5.17}.
\end{proof}




We prove that the root of unity ambiguity has some independence from the Galois group $G$.

\begin{lemma}\label{l:moveterm}
    Let $\sigma , \tau \in G$. Then,
    \[ u_2(\sigma)u_2(\sigma \tau)^{-1} =  u_\mathfrak{p}(\sigma)u_\mathfrak{p}(\sigma \tau)^{-1} . \]
\end{lemma}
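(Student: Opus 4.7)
The plan is to mirror the strategy of Section \ref{s:mainproof}, but replacing Lemma \ref{lemmanew5.17} with Lemma \ref{l:findprime} to produce a single auxiliary prime $\mathfrak{r}$ whose Frobenius is $\tau^{-1}$. First I would apply Lemma \ref{l:findprime} to obtain $\mathfrak{r} \nin S \cup \overline{T}$ with $\sigma_\mathfrak{r} = \tau^{-1}$ and such that $D(\mathfrak{f}, \mathfrak{r})$ contains no element of $\mu_2(F_\mathfrak{p}^\ast)$. Setting $S' \coloneqq S \cup \{\mathfrak{r}\}$, Proposition \ref{prop6.3} $\textit{c)}$ together with the partial zeta function identity (\ref{e:zetareln}) gives the parallel transformation laws
\[ u_2(S', \sigma) = u_2(\sigma)\, u_2(\sigma\tau)^{-1}, \qquad u_\mathfrak{p}(S', \sigma) = u_\mathfrak{p}(\sigma)\, u_\mathfrak{p}(\sigma\tau)^{-1}, \]
so the lemma is reduced to proving the single identity $u_2(S', \sigma) = u_\mathfrak{p}(S', \sigma)$.

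To obtain this identity I would combine two pieces of information about the quotient $\xi \coloneqq u_2(S', \sigma)\, u_\mathfrak{p}(S', \sigma)^{-1}$. On the one hand, Proposition \ref{propGSequality} applied to the data $S'$ forces $\xi \in D(\mathfrak{f}, \mathfrak{g}\mathfrak{r})$, where $\mathfrak{g}\mathfrak{r}$ is the product of the finite primes of $S'$ not dividing $\mathfrak{f}\mathfrak{p}$. On the other hand, Theorem \ref{t:reduce to 2-power root of unity} applied to $S'$ forces $\xi \in \mu^2(F_\mathfrak{p}^\ast)$. Since imposing extra congruence conditions only shrinks $D$, one has $D(\mathfrak{f}, \mathfrak{g}\mathfrak{r}) \subseteq D(\mathfrak{f}, \mathfrak{r})$, and every nontrivial $2$-power root of unity has order divisible by $2$ and therefore lies in $\mu_2(F_\mathfrak{p}^\ast)$. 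By the defining property of $\mathfrak{r}$, the intersection $D(\mathfrak{f}, \mathfrak{r}) \cap \mu^2(F_\mathfrak{p}^\ast)$ is trivial, so $\xi = 1$ and the lemma follows.

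The step I expect to require the most care is verifying that Theorem \ref{t:reduce to 2-power root of unity} and Proposition \ref{propGSequality} really do apply to the enlarged set $S'$ rather than $S$. Here I would check that $\mathfrak{p}$ still splits completely in $H$ (unaffected by enlarging $S$), that assumption (\ref{assumptionforDK}) is independent of $S$, and that $T$ remains disjoint from $S'$, which is ensured by $\mathfrak{r} \nin \overline{T}$. I would also want to confirm the routine facts that $\mathfrak{r}$ is unramified in $H$ and distinct from $\mathfrak{p}$ (both implicit in the existence of a Frobenius $\sigma_\mathfrak{r} \in G$ and in $\mathfrak{r} \nin S$), so that $\mathfrak{g}\mathfrak{r}$ is genuinely the $\mathfrak{g}$-datum attached to $S'$. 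Beyond these bookkeeping verifications, no new deep input is needed: the argument is a clean combination of Lemma \ref{l:findprime}, the functorial properties of $u_2$ and $u_\mathfrak{p}$ in $S$, and the already-established Theorem \ref{t:reduce to 2-power root of unity}.
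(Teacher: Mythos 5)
Your proposal is correct and follows essentially the same route as the paper: add a single auxiliary prime $\mathfrak{r}$ via Lemma \ref{l:findprime}, use (\ref{e:zetareln}) and Proposition \ref{prop6.3}~\textit{c)} to get the parallel transformation laws for $u_\mathfrak{p}$ and $u_2$ under $S \mapsto S'$, then pin down the discrepancy by intersecting $D(\mathfrak{f},\mathfrak{r})$ with the $2$-power roots of unity coming from Theorem \ref{t:reduce to 2-power root of unity} and Proposition \ref{propGSequality}. The only cosmetic difference is that you pass through $D(\mathfrak{f},\mathfrak{g}\mathfrak{r}) \subseteq D(\mathfrak{f},\mathfrak{r})$ explicitly, whereas the paper states the conclusion directly modulo $D(\mathfrak{f},\mathfrak{r})$; the bookkeeping checks you flag (that $\mathfrak{r}$ is unramified, coprime to $\mathfrak{p}$ and $T$, and that (\ref{assumptionforDK}) is unaffected by enlarging $S$) are indeed the right things to verify and all hold.
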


\begin{proof}
    It follows from Theorem \ref{t:reduce to 2-power root of unity}, that the result holds up to a $2$-power root of unity. Let $\mathfrak{r}$ be a prime ideal, chosen via Lemma \ref{l:findprime}, with respect to $\tau$. Let $S^\prime = S \cup \{ \mathfrak{r} \}$. We consider 
    \[ u_\mathfrak{p}(S^\prime, \sigma) . \]
    It follows from (\ref{e:zetareln}) and the definition of $\mathfrak{r}$, that 
    \begin{equation}\label{e:addprime1}
        u_\mathfrak{p}(S^\prime, \sigma) = u_\mathfrak{p}(S, \sigma) u_\mathfrak{p}(S, \sigma \tau)^{-1} .
    \end{equation}
    By $\textit{c)}$ of Proposition \ref{prop6.3} we have
    \begin{equation}\label{e:addprime2}
        u_2(S^\prime, \sigma) = u_2(S, \sigma) u_2(S, \sigma \tau)^{-1} . 
    \end{equation}
    Proposition \ref{propGSequality} implies that
    \[ u_\mathfrak{p}(S^\prime, \sigma) = u_2(S^\prime, \sigma) \quad \text{in} \quad F_\mathfrak{p}^\ast / D(\mathfrak{f}, \mathfrak{r})   . \]
    Recall, that we already know from Theorem \ref{t:reduce to 2-power root of unity} that $u_\mathfrak{p}(S^\prime, \sigma) = u_2(S^\prime, \sigma)$ in $F_\mathfrak{p}^\ast / \mu^2(F_\mathfrak{p}^\ast)$. Lemma \ref{l:findprime} implies that for all $\varepsilon \in \mu_2(F_\mathfrak{p}^\ast)$ we have $\varepsilon \nin D(\mathfrak{f}, \mathfrak{r})$. It therefore follows that
    \[u_\mathfrak{p}(S^\prime, \sigma) = u_2(S^\prime, \sigma).\]
    The result then follows from combining this equality with (\ref{e:addprime1}) and (\ref{e:addprime2}).
\end{proof}


We are ready to prove the main result of this paper.

\begin{proof}[Proof of Theorem \ref{mainthmforpaper}]
    From Theorem \ref{t:reduce to 2-power root of unity} we have that for each $\sigma \in G$, there exists $\gamma_\sigma \in \mu^2(F_\mathfrak{p}^\ast)$ such that
    \[ \gamma_\sigma u_2(\sigma)= u_\mathfrak{p}(\sigma) . \]
    Lemma \ref{l:moveterm} then implies that, for any $\tau \in G$,
    \[ u_2(\sigma)u_2(\sigma \tau)^{-1} =  u_\mathfrak{p}(\sigma)u_\mathfrak{p}(\sigma \tau)^{-1} = \gamma_\sigma   \gamma_{\sigma\tau}^{-1} u_2(\sigma)u_2(\sigma \tau)^{-1} . \]
    Therefore, $\gamma_\sigma =  \gamma_{\sigma\tau}$. Write $\gamma = \gamma_\sigma$, this is independent of the choice of $\sigma \in G$. Thus, by Theorem \ref{t:cc}, for any $\sigma \in G$ we have
    \[ 1= u_2(\sigma)u_2(\sigma c)= \gamma^2  u_\mathfrak{p}(\sigma) \gamma_{\sigma c}  =  \gamma_{\sigma}^2 . \]
    Here, $c \in G$ is complex conjugation. It follows that $\gamma \in \{ 1 , -1 \}$. Since 
    $D(\mathfrak{f}, \mathfrak{g}) \cap F \subseteq E_+(\mathfrak{f})$,
    for any integral ideal $\mathfrak{g}$, it is clear that $-1 \nin D(\mathfrak{f}, \mathfrak{g})$. Thus, considering Proposition \ref{propGSequality}, we have the result.


\end{proof}

\addcontentsline{toc}{section}{References}

\bibliography{bib}
\bibliographystyle{plain}

\end{document}